\documentclass{article}
\usepackage[a4paper, lmargin=0.1\paperwidth, rmargin=0.1\paperwidth, tmargin=0.1111\paperheight, bmargin=0.1111\paperheight]{geometry} 
\usepackage{graphicx} % Required for inserting images
\usepackage{amsmath}
\usepackage{caption}
\usepackage{float}
\usepackage{amsfonts}
\usepackage{subcaption}
\usepackage{chngcntr}
\counterwithin{figure}{section}
\usepackage{hyperref}
\usepackage[toc,page]{appendix}
\usepackage{xcolor}
\usepackage{amsthm}
\usepackage{comment}

\newtheorem{thm}{Theorem}[section]

\newtheorem{lem}[thm]{Lemma}
\newtheorem{cor}[thm]{Corollary}
\newtheorem{remark}[thm]{Remark}

\numberwithin{equation}{section}

\newcommand{\slp}{\mathcal{S}}
\newcommand{\dlp}{\mathcal{D}}
\newcommand{\vp}{\mathcal{V}}
\newcommand{\norm}[1]{\left \lVert #1 \right \rVert}
\newcommand{\bfx}{\mathbf{x}}
\newcommand{\sn}{\sigma_0}
\newcommand{\kb}{\kappa_0}
\newcommand{\yt}{\gamma^{(3)}_0}
\newcommand{\yf}{\gamma^{(4)}_0}
\newcommand{\dhat}{\delta_*}
\newcommand{\R}{\mathbb{R}}

\DeclareMathOperator\erf{erf}
\DeclareMathOperator\erfc{erfc}

\begin{document}

\title{Fast unified evaluation of layer and volume potentials for the 2D modified Helmholtz equation}
\date{}

\author{
Edith Frisk G{\"a}rtner\textsuperscript{1,2}, 
Fredrik Fryklund\textsuperscript{1},
Anna-Karin Tornberg\textsuperscript{1}
}

\maketitle

\footnotetext[1]{Department of Mathematics, KTH Royal Institute of Technology, Stockholm, Sweden.}
\footnotetext[2]{Email: \texttt{egartner@kth.se}.}

\begin{abstract}
\noindent We present a fast and accurate potential theory-based method for the two-dimensional modified Helmholtz equation, treating the involved singular and nearly singular layer evaluations together with volume potentials within a single computational framework. The method is based on a decomposition of the free-space Green's function into a short-range local part and a smooth long-range part. The long-range contribution is evaluated efficiently using the non-uniform fast Fourier transform (NUFFT), while the local contribution is treated by asymptotic expansions. For the layer potentials, an intermediate telescoping sum over dyadic refinement levels is added, where the resulting difference kernels are smooth and rapidly decaying, allowing the dyadic levels to be evaluated without specialized quadrature rules. The volume potential is evaluated on triangular cut-cell meshes, where the mesh only enters the scheme as quadrature rule for smooth data. This makes the method robust with respect to small and distorted mesh cells, without the need for stabilization or cell-merging techniques. Numerical experiments demonstrate the expected convergence rates, high throughput of the potential evaluations, and robustness with respect to mesh quality. 
\end{abstract}

\section{Introduction}
In this paper, we present a unified kernel-splitting framework for the fast and accurate evaluation of layer and volume potentials for the modified Helmholtz equation in complex geometries. The method uses the same evaluation strategy for targets on the boundary, close to the boundary and away from the boundary, avoiding the need for regime-dependent quadrature rules. 
Since the volume potential arises from the source term in the PDE, the volume discretization enters only through quadrature of that term. This makes the method insensitive to mesh-cell shapes and compatible with cut-cell meshes, rendering it particularly attractive for complex or moving geometries. 
Combined, these features render the framework largely agnostic to the underlying discretization, requiring only quadrature rules that accurately integrate smooth functions over the boundary and the domain. 
%Combined, these features make the method easy to apply, requiring only quadrature rules for smooth integration over the boundary and the domain, and making it particularly convenient for volume potentials, where the method is largely agnostic to the underlying discretization.

%Importantly, the volume discretization only enters through quadrature of the source term, making the approach insensitive to mesh-cell shapes and compatible with cut-cell meshes, which is attractive for complex or moving geometries. Combined, these features make the method easy to apply, in particular for computing volume potentials, since it is agnostic to which smooth quadrature rule is provided.

One important motivation to consider the modified Helmholtz equation comes from elliptic marching, or Rothe's method, for time-dependent parabolic problems  \cite{chapko1997rothes, fryklund2020integral, kropinski2011rothe}. Implicit time discretizations of such equations often lead to elliptic boundary value problems that must be solved at each time step. For example, implicit time-stepping of the heat equation gives an equation of the form
\begin{align}\label{mod_helm}
    (-\Delta + \alpha^2)u(\bfx) = f(\bfx), \quad \bfx\in\Omega,
\end{align}
 which is the modified Helmholtz equation, also known as the Yukawa or the screened Poisson equation. Here $\alpha^2$ is proportional to the inverse of the time step. Since such problems must be solved repeatedly, and often in complex or moving geometries, fast and accurate solvers are needed. 

 Integral equation methods are attractive in this setting because they avoid discretizing the differential operator on the mesh. Instead, the solution of an inhomogenous boundary value problem is represented as the sum of a particular solution, in the form of a volume potential, and a homogeneous solution, represented by layer potentials. In two dimensions, the free-space Green's function for \eqref{mod_helm} is 
\begin{align}\label{Greens}
    G(\bfx) = \frac{1}{2\pi}K_0(\alpha\norm{\bfx}),
\end{align}
where $\|\bfx\|$ is the Euclidean norm, $K_n(\cdot)$ are the modified Bessel functions of the second kind and $n$th order, and the corresponding volume potential is
\begin{align}\label{vp}
    \vp[f](\bfx) = \int_\Omega G(\bfx-\bfx')\,f(\bfx')\,\mathrm{d}\bfx',
    \quad \bfx\in\Omega.
\end{align}
For a given source term $f$, the volume potential is evaluated, and the unknowns are layer densities defined only on the boundary $\partial\Omega$, that will be determined evoking the boundary conditions.  Thus the dimension of the problem is reduced by one. A computational challenge is that the Green's function is singular at the origin. Special care is therefore needed when evaluating layer potentials at points on or close to the boundary. For volume potentials, the singularity will always be inside, or on the boundary of, the integration domain. Moreover, direct evaluation of the potentials at $N$ target points from $N$ sources has computational cost $\mathcal{O}(N^2)$, motivating the need for fast algorithms.

 Several approaches have been developed to address the challenges that arise for the modified Helmholtz equation, and for elliptic PDEs more generally, when $f$ is available only as volumetric data on an unstructured mesh. In particular in the context of fast algorithms, such as in \cite{cheng2006adaptive} where an adaptive FMM-accelerated integral equation method for the inhomogeneous modified Helmholtz equation in two dimensions is developed. This FMM is further leveraged in the context of elliptic marching \cite{kropinski2011rothe} with the discretization of the forced isotropic heat equation in time. It results in solving a sequence of inhomogeneous modified Helmholtz problems represented using a volume potential and a double layer potential, with computations accelerated by the FMM. In that formulation, the volume potential is evaluated over a simple computational box, which requires the right-hand side $f$ to be available as a compactly supported function on that box, or for such an extension to be supplied. In \cite{fryklund2020integral} this idea is combined with a high-regularity extension of the right-hand side from the physical domain $\Omega$ to a compactly supported function $f^e$ on a simple enclosing box. This permits the particular solution to be evaluated as a free-space volume potential on the box, while the boundary conditions are imposed through a homogeneous boundary integral equation. 

Among other approaches are dual reciprocity methods, where the source term is approximated by basis functions for which particular solutions are available analytically \cite{chen1998dual,muleshkov2005particular}. A transform-based approach was recently developed in \cite{croix2026lightning}, which uses the Laplace transform to reduce the heat equation to modified Helmholtz problems and solve the resulting elliptic equations using the Lightning method \cite{gopal2019new}. In addition, Yukawa kernels have been treated in the fast multipole literature \cite{greengard2002new}. Other related numerical approaches include kernel-free boundary integral methods \cite{xie2018fourth} and method of fundamental solutions approaches \cite{chen2015fast}. Finally, some fast Poisson solvers may also be adaptable to the modified Helmholtz equation, such as the fully adaptive high-order solver of \cite{fortunato2025fully}, which combines a fast box solver in the bulk with spectral collocation in a boundary-conforming strip.

In the above, we mainly discussed methods to handle the volume potential. 
General high-order approaches for singular and nearly singular layer potential evaluation include kernel-split product integration methods \cite{helsing2008evaluation}, and quadrature by expansion (QBX) methods \cite{klockner2013quadrature}. 
For the modified Helmholtz equation, the parameter dependence introduces an additional difficulty: as $\alpha$ increases, the layer kernels become more localized. The kernel split quadrature is based on writing the kernel on a form with smooth functions multiplying explicit singularities, and large errors arise if these smooth functions cannot be locally well approximated by polynomials. 
This issue was studied in \cite{fryklund2022adaptive}, where an adaptive refinement strategy for kernel-split product integration was introduced for parameter-dependent layer potentials. 

The method developed in this paper extends the fast and geometrically flexible algorithm for the Poisson equation in \cite{fryklund2026lightweight} to the modified Helmholtz equation. The method is based on an integral representation of the Green's function, which allows the kernel to be decomposed into a short-range and a long-range contribution. The long-range component can be evaluated efficiently in Fourier space using the non-uniform fast Fourier transform (NUFFT) \cite{dutt1993fast, greengard2004accelerating}. The near-singular short-time contribution is further subdivided into two parts: one that is treated analytically using local asymptotic expansions, and one represented as a telescoping sum over dyadic refinement levels. This decomposition enables high accuracy in the local expansions while controlling the computational cost of the NUFFT. This combination of kernel splitting and asymptotic analysis is closely related to earlier work on heat potentials \cite{greengard1990fast, li2009high, wang2019hybrid} and the dual-space multilevel kernel-splitting (DMK) framework \cite{jiang2025dual}.

An important feature of the method is its geometrical flexibility. In this paper, we make use of this feature by incorporating triangular cut-cell meshes. In this approach, the domain is embedded in a uniform background grid which is intersected with the boundary to produce a boundary fitted triangulation. Such meshes have an advantage in problems involving time-dependent geometries, since geometric changes only affect the boundary intersecting elements, and so remeshing becomes less costly. Moreover, because the particular solution is computed through a volume potential formulation, the method is largely insensitive to degenerate and poorly shaped elements, avoiding the stability issues that often arise in finite element discretizations.

The main contribution of this work is the extension of the heat kernel based kernel-splitting framework of \cite{fryklund2026lightweight} to the modified Helmholtz equation. In particular, we derive asymptotic expansions for the short-time contributions of the single layer, double layer, and volume potentials associated with the modified Helmholtz kernel. Compared to the Poisson case, the presence of the parameter $\alpha$ modifies both the local asymptotics and the Fourier space history term, and we analyze how this parameter affects the accuracy and efficiency of the method. We also combine the resulting method with triangular cut-cell meshes, demonstrating applicability to geometrically complex domains.

The remainder of the paper is organized as follows. In Section \ref{sec:ker_split}, we introduce the integral representation of the kernel and the decomposition of volume and layer potentials into a local and a history part. In Section \ref{sec:disc} we describe the triangular cut-cell meshes of the domain and the boundary quadrature, as well as the discretization of the integral equation. In Section \ref{sec:local} we derive the asymptotic expansions for all potentials, as well as the dyadic refinement strategy used for the layer potentials. Section \ref{sec:hist} describes the Fourier evaluation of the long-range contributions using the NUFFT. Finally, Section \ref{sec:num} presents numerical results, including validation of asymptotic expansions and robustness experiments with respect to mesh quality. 

%%%%%%%%%%%%%%%%%%%%%%%%%%%%%%
%%%%%%%%%%%%%%%%%%%%%%%%%%%%%%
%%%%%%%%%%%%%%%%%%%%%%%%%%%%%%
\section{Potential formulation and kernel splitting}\label{sec:ker_split}

Consider equation \eqref{mod_helm}, with $\Omega\subset\mathbb{R}^2$ either an interior or an exterior domain.  We consider both Dirichlet boundary conditions
\begin{align}
    u(\bfx) &= g(\bfx), \quad \bfx\in\partial\Omega, \label{Dirichlet}
\end{align}
and Neumann boundary conditions
\begin{align}
    \frac{\partial u}{\partial \boldsymbol{\nu}}(\bfx) &= g(\bfx), \quad \bfx\in\partial\Omega, \label{Neumann}
\end{align}
where $\boldsymbol{\nu}$ denotes the outward unit normal on the boundary $\partial\Omega$.

Using the free-space Green's function \eqref{Greens}, we define the volume potential \eqref{vp}, the single layer potential,
\begin{align}\label{slp}
    \slp[\sigma](\bfx) & = \int_{\partial\Omega} G(\bfx-\bfx')\,\sigma(\bfx')\,\mathrm{d}s_{\bfx'}, \quad \bfx\in\Omega,
\end{align}
 and the double layer potential,
\begin{align}\label{dlp}
    \dlp[\mu](\bfx) &= \int_{\partial\Omega} 
    \frac{\partial G}{\partial \nu_{\bfx'}}\!\left(\bfx-\bfx'\right)
    \mu(\bfx')\,\mathrm{d}s_{\bfx'}, \quad \bfx\in\Omega.
\end{align}

For the double layer potential, the limiting values as $\bfx$ approaches the boundary satisfy the classical jump relations 
\begin{align}\label{jumprel}
    \lim_{\mathbf{y}\to\bfx\in\partial\Omega, \mathbf{y}\in\Omega^{\pm}} 
    \dlp[\mu](\mathbf{y}) 
    = \dlp[\mu](\bfx) \pm \frac{1}{2}\mu(\bfx),
\end{align}
where $\Omega^+$ and $\Omega^-$ denote the exterior and interior of $\Omega$, respectively. In addition, the double layer kernel has a bounded diagonal limit,
\begin{align}\label{dlp_diag_limit}
\lim_{\bfx'\to\bfx}\frac{\partial G}{\partial \nu_{\bfx'}}\left(\bfx-\bfx'\right)
=-\frac{\kappa(\bfx)}{4\pi},\qquad \bfx\in\partial\Omega,
\end{align}
where $\kappa$ is the signed curvature of $\partial\Omega$. 

From potential theory \cite{kress2014linear}, we know that the solution to the Dirichlet problem can be represented by
\begin{align}\label{sol_dirichlet}
    u(\bfx) = \vp[f](\bfx) + \dlp[\mu](\bfx), \quad \bfx\in\Omega,
\end{align}
where $\mu$ is an unknown layer density. Taking the limit of \eqref{sol_dirichlet} as $\bfx$ approaches the boundary and applying the jump relations~\eqref{jumprel} yields the boundary integral equation
\begin{align}\label{bie_dir}
    -\frac{1}{2}\mu(\bfx') + \dlp[\mu](\bfx') = g(\bfx') - \vp[f](\bfx'),
    \quad \bfx'\in\partial\Omega,
\end{align}
which is solved for $\mu$. For the Neumann problem, Green’s representation formula gives
\begin{align}
\label{sol_neumann}
    u(\bfx) = \vp[f](\bfx) + \slp\left[\frac{\partial u}{\partial \nu}\right](\bfx) - \dlp[u](\bfx), \quad \bfx\in\Omega,
\end{align}
and the corresponding boundary integral equation
\begin{align}\label{bie_neu}
    \frac{1}{2}u(\bfx') + \dlp[u](\bfx') = \vp[f](\bfx') + \slp[g](\bfx'),
    \quad \bfx'\in\partial\Omega.
\end{align}

\subsection{Decomposition of volume and layer potentials}\label{sec:decomp}
The free-space Green's function \eqref{Greens} can equivalently be expressed through an integral representation \cite{jiang2025dual} as
\begin{align}\label{ker_greens}
        \frac{1}{2\pi}K_0(\alpha \norm{\mathbf{x}}) = \int_0^\infty \frac{e^{-\frac{\norm{\bfx}^2}{4t}-\alpha^2 t}}{4\pi  t}\,\mathrm{d}t.
\end{align}
We denote the integrand by
\begin{align}\label{H_ker}
    H(t,\bfx) = \frac{e^{-\frac{\norm{\bfx}^2}{4t}-\alpha^2 t}}{4\pi  t},
\end{align}
and insert this representation into the single layer potential \eqref{slp}. We then partition the time integral at a positive parameter $\delta$, so that
\begin{align}
    \slp[\sigma](\bfx) = \slp_L[\sigma](\bfx) + \slp_H[\sigma](\bfx),
\end{align}
where
\begin{align}\label{sl_sh}
    \slp_L[\sigma](\bfx) = \int_0^\delta\int_{\partial\Omega} H(t,\bfx-\bfx')\sigma(\bfx')\,\mathrm{d} s_{\bfx'} \mathrm{d}t, \quad \slp_H(\bfx) = \int_\delta^\infty\int_{\partial\Omega} H(t,\bfx-\bfx')\sigma(\bfx')\,\mathrm{d} s_{\bfx'} \mathrm{d}t
\end{align}
The first integral, $\slp_L$, represents the short-range, local contribution, while $\slp_H$ captures the smooth, long-range component. Following the terminology of \cite{fryklund2026lightweight}, we use the term \textit{history} to refer to the long-range contribution. This comes from the heat-kernel representation: small values of the integration variable $t$ correspond to contributions that are local in time, while large values of $t$ correspond to contributions from further back in history. 

In practice, the local part is further decomposed into an asymptotic part $\slp_{L,A}$ and $J$ dyadic refinement levels $\slp_{L,D}^{(j)}$, $j=1,\dots, J$. This is achieved by subdividing the local integral over $[0,\delta]$ as a sum of smaller time intervals, where each time interval is progressively refined. 

Let $\dhat = \delta/4^J$ for some integer $J\geq 0$. Then
\begin{align}\label{slp_dyad}
\begin{split}
    \slp_L[\sigma](\bfx) &= \slp_{L,A} +\sum_{j=1}^J \slp_{L,D}^{(j)}[\sigma](\bfx),
\end{split}
\end{align}
where
\begin{align}
    \slp_{L,A} &= \int_0^{\dhat}\int_{\partial\Omega} H(t,\bfx-\bfx')\sigma(\bfx')\,\mathrm{d} s_{\bfx'}\mathrm{d} t\label{sla}\\
    \slp_{L,D}^{(j)} &= \int_{\delta/4^j}^{\delta/4^{j-1}}\int_{\partial\Omega}H(t,\bfx-\bfx')\sigma(\bfx')\,\mathrm{d}s_{\bfx'}\mathrm{d}t.
\end{align}
To evaluate the contributions on each dyadic level, we use the identity
\begin{align}\label{equiv_split}
    \int_0^\delta \frac{e^{-\frac{\norm{\bfx}^2}{4t}-\alpha^2 t}}{4\pi  t}\,\mathrm{d}t = G^R(\bfx,\delta),
\end{align}
where \cite{fryklund2023integral}
\begin{align}\label{ewald_real}
    G^R(\mathbf{x},\delta) = \frac{1}{4\pi}K_0\left(\frac{\norm{\bfx}^2}{4\delta},\alpha^2\delta\right)
\end{align}
is the real-space part of an Ewald decomposition \cite{ewald1921}, and $K_n(\cdot,\cdot)$ denotes the incomplete modified Bessel functions of the second kind and $n$th order. 

Using \eqref{equiv_split}, the contribution from each dyadic interval can be expressed as a difference of two real-space kernels
    \begin{align}
        \int_{\delta/4^j}^{\delta/4^{j-1}}
          H(t,\bfx-\bfx')\, \mathrm{d}t
        =
        G^R(\mathbf{x}-\bfx', \tfrac{\delta}{4^{j-1}})
        -
        G^R(\mathbf{x}-\bfx', \tfrac{\delta}{4^{j}}).
    \end{align}
Therefore, at each level $j$, we define the difference kernel
\begin{align}\label{diffker_s}
    \mathcal{K}_j^{\slp}(r,\alpha,\delta) = \frac{1}{4\pi}\left(K_0\left(\frac{4^{j-2}r^2}{\delta},\alpha^2\frac{\delta}{4^{j-1}}\right)-K_0\left(\frac{4^{j-1}r^2}{\delta},\alpha^2\frac{\delta}{4^{j}}\right)\right),
\end{align}
so that
\begin{align}
        \slp_{L,D}^{(j)}[\sigma](\bfx) &= \int_{\partial\Omega} \mathcal{K}_j^{\slp}(\norm{\bfx-\bfx'},\alpha,\delta)\sigma(\bfx')\,\mathrm{d}s_{\bfx'}\label{sld}.
\end{align}
Each difference kernel is smooth and rapidly decaying, and hence no special quadrature techniques are needed for evaluation. 

\vspace{5mm}

\noindent\textbf{Double layer potential.} The double layer potential is decomposed analogously
\begin{align}
    \dlp[\mu](\bfx) = \dlp_L[\mu](\bfx)+\dlp_H[\mu](\bfx),
\end{align}
with
\begin{align}
    \dlp_L[\mu](\bfx) &= \int_0^\delta \!\int_{\partial\Omega}\frac{\partial}{\partial\nu_{\bfx'}}H_t(\bfx-\bfx')\mu(\bfx')\,\mathrm{d} s_{\bfx'}\mathrm{d} t, \quad \dlp_H[\mu](\bfx) = \int_\delta^\infty\!\int_{\partial\Omega}\frac{\partial}{\partial\boldsymbol{\nu}_{\bfx'}}H_t(\bfx-\bfx')\mu(\bfx')\,\mathrm{d} s_{\bfx'}\mathrm{d} t.
\end{align}
The local part $\dlp_L$ is subdivided in the same manner as in \eqref{slp_dyad} into an asymptotic part $\dlp_{L,A}$ and $J$ dyadic refinement levels $\dlp_{L,D}^{(j)}$, where
\begin{align}
    \dlp_{L,A} &= \int_0^{\dhat} \!\int_{\partial\Omega}\frac{\partial}{\partial\boldsymbol{\nu}_{\bfx'}}H_t(\bfx-\bfx')\mu(\bfx')\,\mathrm{d} s_{\bfx'}\mathrm{d} t,\label{dla}\\
    \dlp_{L,D}^{(j)} &= \int_{\partial\Omega}\boldsymbol{\nu}(\bfx')\cdot(\bfx-\bfx')\mathcal{K}_j^{\dlp}(\norm{\bfx-\bfx'},\alpha,\delta)\mu(\bfx')\,\mathrm{d}s_{\bfx'},\label{dld}
\end{align}
with the corresponding difference kernel, again smooth and rapidly decaying, 
\begin{align}\label{diffker_dlp}
    \mathcal{K}_j^{\dlp}(r,\alpha,\delta) =\frac{4^{j-2}}{2\pi\delta}K_1\left(\frac{4^{j-2}r^2}{\delta},\frac{\alpha^2\delta}{4^{j-1}}\right)-\frac{4^{j-1}}{2\pi\delta}K_1\left(\frac{4^{j-1}r^2}{\delta},\frac{\alpha^2\delta}{4^j}\right).
\end{align}

\noindent\textbf{Volume potential.} For the volume potential, we define 
\begin{align}
    \vp_L[f](\bfx) &=\int_0^\delta\int_{\Omega} H_t(\bfx-\bfx')f(\bfx')\,\mathrm{d} \bfx'\mathrm{d} t,\label{vl} \\
        \vp_H[f](\bfx) &=\int_\delta^\infty\int_{\Omega} H_t(\bfx-\bfx') f(\bfx')\,\mathrm{d} \bfx'\mathrm{d} t.
\end{align}
In this case, the local contribution is evaluated entirely using asymptotic expansions, i.e. $\vp_L=\vp_{L,A}$. The reasons for this are discussed at the end of Section \ref{sec:dyad}.

%%%%%%%%%%%%%%%%%%%%%%%%%%%%%%
%%%%%%%%%%%%%%%%%%%%%%%%%%%%%%
%%%%%%%%%%%%%%%%%%%%%%%%%%%%%%
\section{Discretization of the domain}\label{sec:disc}
To evaluate the layer and volume potentials numerically, we require discretizations and quadrature rules for both the domain $\Omega$ and its boundary $\partial\Omega$. The boundary quadrature is used for the single and double layer potentials, while the volume quadrature is used for the source term in the volume potential. As our kernel-split framework takes care of singularities and near singularities, we only need quadrature methods for the integration of smooth functions. The two discretizations are constructed independently of each other.

\subsection{Triangular cut–cell mesh and associated quadrature}\label{sec:tri_cut_mesh}

We embed the computational domain $\Omega$ in a uniform background triangular grid with mesh size $\Delta x$, where $\Delta x$ denotes the side length of a triangle in the grid. Each vertex of the background grid is classified as interior or exterior to $\Omega$ using the boundary representation. A triangle is retained as an interior element if all vertices lie inside $\Omega$, discarded if all vertices lie outside, and marked as a cut element if its vertices lie on both sides of $\partial\Omega$. Cut elements are, in general, polygonal and are therefore subdivided into triangles using a local triangulation procedure. The resulting collection of elements
\begin{align}
    \mathcal{T}_h = \{T_j\}_{j=1}^{M}
\end{align}
is a triangular cut–cell mesh that geometrically conforms to $\partial\Omega$, i.e.
\begin{align}
    \Omega \approx \bigcup_{j=1}^{M} T_j.
\end{align}
The volume potential is evaluated by quadrature over the elements of $\mathcal{T}_h$. On each triangle $T_j$, we use a Vioreanu-Rokhlin quadrature rule \cite{vioreanu2014spectra} on a reference triangle. In our numerical experiments, we use a rule with six quadrature nodes per triangle, which is sufficient for third-order accuracy. The reference nodes and weights are mapped to the physical element by an isoparametric mapping, and the physical quadrature weights include the corresponding Jacobian factor. For interior triangles, the mapping is affine, and the polynomial exactness of the reference quadrature rule is preserved. For boundary triangles, we use a quadratic isoparametric mapping in which the boundary curvature is represented by retaining the associated mid-edge shape function. Such mappings are standard in the finite element literature, see for example \cite{zienkiewicz2013finite}. The quadratic mapping gives a third-order accurate representation of the local boundary geometry in $\Delta x$. 

 This construction gives a set of $N$ quadrature nodes $\{\bfx'_j\}_{j=1}^{N}$ with corresponding weights $\{w'_j\}_{j=1}^{N}$, that approximates the integral of a smooth function $f(\bfx)$ over $\Omega$,
\begin{align}\label{vol_quad}
    \int_\Omega f(\bfx')\,\mathrm{d}\bfx' \approx \sum_{j=1}^N f(\bfx'_j) w_j'.
\end{align}

Cut–cell meshes are widely used in finite volume and finite element methods, but the presence of degenerate elements or high aspect ratios typically requires stabilization techniques or cell–merging strategies to maintain stability and good conditioning \cite{burman2025cut}. In contrast, the volume–potential formulation used here is largely insensitive to such geometric irregularities. This robustness comes from the fact that no differential operator is discretized on the mesh. The volumetric mesh is only used to provide quadrature nodes and weights for the source term in the volume potential, including the Fourier coefficients for the evaluation of the smooth long-range part. Since standard quadrature rules maintain their accuracy on distorted elements, degenerate or sliver triangles do not significantly affect accuracy or convergence in practice, and no stabilization procedures or cell-merging techniques are needed.

\subsection{Boundary discretization and quadrature}\label{sec:bound_disc}
The boundary $\partial\Omega$ is assumed to be a smooth closed curve, and the boundary discretization is independent of the volume mesh. The boundary is partitioned into $M_B$ panels (or chunks), 
\begin{align}
    \partial\Omega = \bigcup_{j=1}^{M_B} \Gamma_j,
\end{align}
and we assume each panel $\Gamma_j$ to have an analytical parameterization, $\gamma_j(t)$, scaled to $[-1,1]$. Each panel is equipped with an $n_B$-point Gauss-Legendre rule on $[-1,1]$.  
However, the method only requires a sufficiently smooth parametrization to permit evaluation of required derivatives, which may also be constructed from discrete data via interpolation. Then, each panel can be represented by a polynomial of degree $q_B\geq 2$, constructed from $q_B+1$ interpolation points. 

By the use of the composite Gauss-Legendre quadrature, through $\gamma_j(t_q)$, $j=1,\ldots,M_B$, $q=1,\ldots,n_B$,  we define $N_B=M_B n_B$ boundary nodes $\{\bfx_j\}_{j=1}^{N_B}$,
 with corresponding quadrature weights $\{w_j\}_{j=1}^{N_B}$, and 
 for a smooth function $g(\bfx)$ we approximate 
 \begin{align}
    \int_{\partial \Omega} g(\bfx')\,\mathrm{d} s_{\bfx'} \approx 
\sum_{j=1}^{N_B} g(\bfx_j) w_j.
\label{eq:bdry_quad}
\end{align}

%In our numerical experiments, we assume that the boundary $\partial\Omega$ is given by an analytic parametrization, which allows geometric quantities such as curvature to be evaluated directly. However, the method only requires a sufficiently smooth parametrization to permit taking the required derivatives, which may also be constructed from discrete data via interpolation. In both cases, the boundary discretization is independent of the volume mesh, meaning that distorted mesh elements do not affect the discretization of the boundary. 

%When an analytic parametrization is not available, the boundary $\partial\Omega$ is assumed to be a smooth closed curve and is partitioned into $M_B$ panels (or chunks)
%\begin{align*}
%    \partial\Omega = \bigcup_{j=1}^{M_B} \Gamma_j.
%\end{align*}
%Each panel is represented by a polynomial of degree $q_B\geq 2$, which ensures accurate evaluation of geometric quantities such as curvature. Each panel is equipped with $q_B+1$ Gauss-Legendre nodes $t_i$ on $[-1,1]$, and all panels are parameterized in terms of chordal arclength.

For the local asymptotic corrections, we will also need to locate the closest boundary point to a target point $\bfx$. To do so, we first identify the nearest discretization point on $\partial\Omega$ and use it to select a panel. The closest point on that panel is then obtained by solving
\begin{align*}
\frac{d}{dt}
\left\lVert
\mathbf{x}-\boldsymbol{\gamma}_j(t)
\right\rVert^2
=0, \qquad t\in [-1,1],
\end{align*}
This nonlinear equation is solved by Newton iteration. When an analytic parameterization is available, $\boldsymbol{\gamma}_j(t)$ and its derivatives are evaluated directly. Otherwise, $\boldsymbol{\gamma}_j(t)$ denotes the local polynomial approximation of the panel.

\subsection{Discretization of integral equation and evaluation of solution}
The boundary integral equations \eqref{bie_dir} and \eqref{bie_neu} are discretized using the Nyström method on the boundary quadrature nodes.
 As the double layer kernel has a bounded limit \eqref{dlp_diag_limit} and is smooth on the boundary $\partial \Omega$, we can discretize the double layer potential using the quadrature rule for smooth functions as defined in \eqref{eq:bdry_quad}. 

Let $\boldsymbol{\nu}_i$ and $\kappa_i$ denote the outward normal and curvature at the corresponding boundary node $\bfx_i$, $i=1,\ldots,N_B$.
We define the discrete double layer matrix $K \in \R^{N_b \times N_b}$ by
\begin{align}
    K_{ij} = w_j\frac{\alpha}{2\pi}K_1(\alpha\lVert\bfx_i-\bfx_j\rVert)\frac{(\bfx_i-\bfx_j)\cdot\boldsymbol{\nu}_j}{\lVert\bfx_i-\bfx_j\rVert}, \qquad i\ne j.
\end{align}
The diagonal entries are obtained from the limiting value \eqref{dlp_diag_limit},
\begin{align}
    K_{ii} = -\frac{w_i\kappa_i}{4\pi}.
\end{align}
Thus the discrete Dirichlet problem, collocated at the boundary nodes, is
\begin{align}
\label{dirichlet-discrete}
    \left(-\frac{1}{2}I+K\right)\boldsymbol{\mu} = \mathbf{g}-\mathbf{V},
\end{align}
where $\mathbf{g} \in \R^{N_B}$ is the Dirichlet data, 
and $\mathbf{V} \in \R^{N_B}$ the volume potential, both evaluated at the boundary nodes. 
For the Neumann problem, the discrete system is
\begin{align}
\label{neumann-discrete}
    \left(\frac{1}{2}I+K\right)\boldsymbol{u} = \mathbf{V}+\mathbf{S}_g,
\end{align}
where $\mathbf{S}_g\in \R^{N_B}$ denotes the single-layer potential with density given by the Neumann data $g$, evaluated at each boundary node. 

The resulting systems are solved using GMRES, to obtain $\boldsymbol{\mu}\in \R^{N_B}$ and $\boldsymbol{u}\in \R^{N_B}$, respectively. Once these quantities are known, we can evaluate the solution anywhere in the domain as a post-processing step, using the ansatz \eqref{sol_dirichlet} and \eqref{sol_neumann}, respectively. To evaluate the entries in $\mathbf{V}$ and $\mathbf{S}_g$, as well as the solutions, we will employ our kernel split scheme, that will be explained in the next two sections. Note that even though the double layer kernel has a bounded on-surface limit, it is still highly peaked for any target point close to the boundary, and will also be included in this framework. Hence, the specific limit \eqref{dlp_diag_limit} will not be used in the post-processing step.  

%%%%%%%%%%%%%%%%%%%%%%%%%%%%%%
%%%%%%%%%%%%%%%%%%%%%%%%%%%%%%
%%%%%%%%%%%%%%%%%%%%%%%%%%%%%%
\section{Evaluation of local part}\label{sec:local}

The local parts for the layer potentials are subdivided into an asymptotic part and dyadic refinement levels. For the volume potential, the local part is evaluated exclusively using local asymptotics.

\subsection{Asymptotic expansions}

\begin{figure}
\centering
    \begin{subfigure}[b]{0.3\textwidth}
    \centering
    \includegraphics[width=5.2cm]{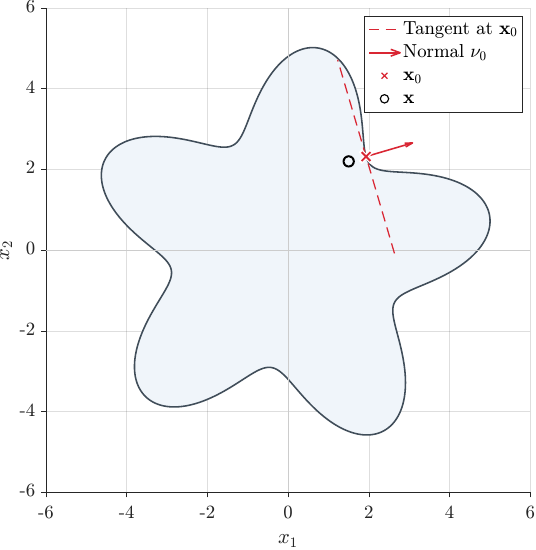}
    \caption{\label{fig:dom_unrot}}
    \end{subfigure}
\quad
    \begin{subfigure}[b]{0.3\textwidth}
    \centering
    \includegraphics[width=5.2cm]{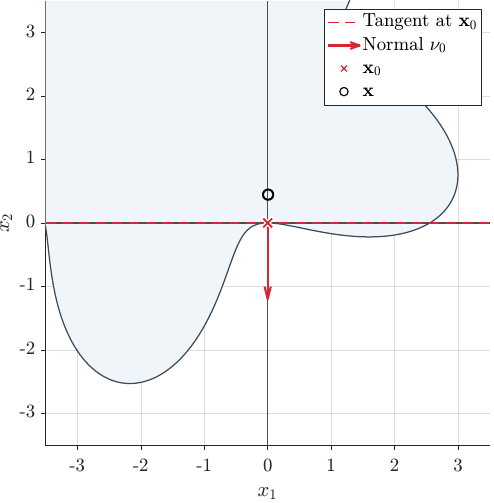}
    \caption{\label{fig:dom_rot_slp}}
    \end{subfigure}
\quad
    \begin{subfigure}[b]{0.3\textwidth}
    \centering
    \includegraphics[width=5.2cm]{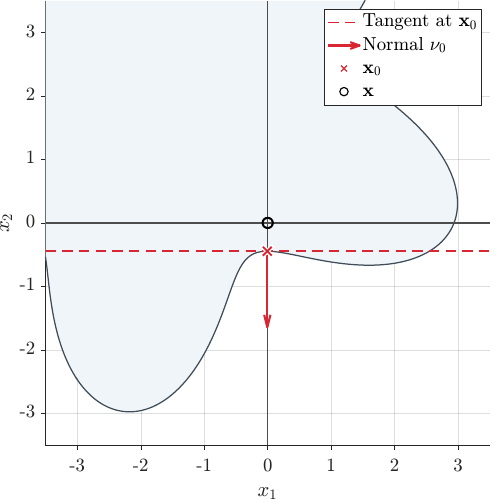}
    \caption{\label{fig:dom_rot_vp}}
    \end{subfigure}
\caption{\label{dom_rots} Illustration of the local coordinate transformation used in the asymptotic expansions: (a) original domain, (b) rotated and translated configuration for layer potentials, and (c) corresponding setup for the volume potential.}.
\end{figure}

We now derive the asymptotic formulas used for the local parts of the layer and volume potentials. The derivations have been carried out up to second order in $\delta_*$, but in practice we keep terms only up to first order, with a leading order error term of $\mathcal{O}(\delta_*^{3/2})$, to avoid the computation of high-order derivatives of the boundary geometry and layer densities.

We assume that $\delta_*$ is small enough so that every target point $\bfx$ within a distance of $\sqrt{\delta_*}(12+2\alpha\sqrt{\delta_*})$ from the boundary $\partial\Omega$ has a unique closest point $\bfx_0$ on the boundary. The distance $\sqrt{\delta_*}(12+2\alpha\sqrt{\delta_*})$ is chosen so that the Gaussian factor in \eqref{H_ker} is negligible, see also Remark \ref{rem:locality}.
To facilitate the asymptotic expansions, we introduce local coordinates obtained by translating and rotating the domain so that the tangent line at $\bfx_0$ is aligned with the $x_1$-direction. For the layer potentials, we place $\bfx_0$ at the origin, while for the volume potential we place $\bfx$ at the origin, see Figure \ref{dom_rots}. Denote by $\xi$ the excursion in the tangent line direction. By construction, we may approximate the boundary locally as $\partial\Omega \approx (\xi,\gamma(\xi))$. Since the kernel decays rapidly, only a neighborhood of the closest boundary point contributes to the local part. Therefore, we may restrict integration to a local neighborhood of $\bfx_0$ and extend it to $\mathbb{R}$, introducing only an exponentially small error. 

\vspace{0.5em}
\noindent\textbf{Single layer potential.}
We first consider the single-layer potential and derive its asymptotic expansion in the local coordinate system described above.

\begin{lem}\label{lem1_slp}
    Let $\bfx\in\mathbb{R}^2$ and assume there is a unique closest point $\bfx_0$ on $\partial\Omega$. Define $r = \norm{\bfx-\bfx_0}$ and $\rho = \frac{(\mathbf{x}-\mathbf{x}_0)\cdot\boldsymbol{\nu}_0}{\lVert \mathbf{x}-\mathbf{x}_0\rVert}$, where $\boldsymbol{\nu}_0$ is the outward normal at $\bfx_0$. Let $\kappa_0$ denote the curvature of the boundary at $\bfx_0$ and let $\sigma_0 = \sigma(\bfx_0)$. Introduce $c_1 = r/\sqrt{\dhat}$ and $c_2 = \alpha\sqrt{\dhat}$. Define
    \begin{align*}
        \Phi_- &= e^{-c_1c_2}\erfc\!\left(\tfrac{c_1}{2} - c_2\right)-e^{c_1c_2}\erfc\!\left(\tfrac{c_1}{2} + c_2\right),
        \\
        \Phi_+ &= e^{-c_1c_2}\erfc\!\left(\tfrac{c_1}{2} - c_2\right)+e^{c_1c_2}\erfc\!\left(\tfrac{c_1}{2} + c_2\right),
        \\
        Z &= \frac{4e^{-c_1^2/4-c_2^2}}{\sqrt{\pi}},
    \end{align*}
    where $\erfc$ denotes the complementary error function. 
    
    For $m=0,1,2,3$, define
    \begin{align*}
        P_m = c_1^m\frac{\Phi_-}{c_2},
    \end{align*}
    and, for $m=0,1$, define
    \begin{align*}
        Q_m = c_1^m\left[\frac{\Phi_--Z}{c_2^3}+\frac{c_1\Phi_+}{c_2^2} \right].
    \end{align*}
    Then \eqref{sla} can be written
    \begin{align}\label{slp_local1}
    \begin{split}
        \slp_{L,A}[\sigma](\bfx)=&\sqrt{\dhat}\frac{\sn}{4}P_0+\dhat\frac{\rho\kb\sn}{8}P_1+\dhat^{3/2}\left[\frac{\kb^2\sn}{32}(Q_0+3P_2)+\frac{\sn''}{8}Q_0\right]\\
        &+\dhat^2\rho\left[\frac{\kb^3\sn}{64}(-3Q_1+5P_3)+\left(\frac{3\kb\sn''}{16}+\frac{\sn'\yt}{8}+\frac{\sn\yf}{32}\right)Q_1\right]+\mathcal{O}(\dhat^{5/2}).
        \end{split}
    \end{align}
    Here, $\sigma_0'$ and $\sigma''_0$ denote the first and second derivatives of the density $\sigma$,
    %in the tangent line parameterization at $\bfx_0$, 
    and $\gamma_0^{(3)}$ and $\gamma_0^{(4)}$ denote the third and fourth derivatives of the curve $\partial\Omega$ at $\bfx_0$, all in the tangent line parameterization.
\end{lem}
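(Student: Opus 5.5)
We work in the local coordinates of Figure \ref{dom_rots}(b): $\bfx_0$ is at the origin, the tangent is along $x_1$, the boundary is $(\xi,\gamma(\xi))$ with $\gamma(0)=\gamma'(0)=0$, $\gamma''(0)=\kb$ (up to the sign convention for $\boldsymbol{\nu}_0$), and the target is $\bfx=(0,\rho r)$, i.e.\ at distance $r$ along the normal. We then rescale time as $t=\dhat s$ with $s\in(0,1)$ and the tangential variable as $\xi=\sqrt{\dhat}\,\eta$. Write $\sigma(\xi)$ for the density in the tangent-line parameterization and $\mathrm{d}s_{\bfx'}=\sqrt{1+\gamma'(\xi)^2}\,\mathrm{d}\xi$. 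This gives
\begin{align*}
\slp_{L,A}[\sigma](\bfx)=\frac{\sqrt{\dhat}}{4\pi}\int_0^1\!\int_{\mathbb{R}}\frac{1}{s}\,e^{-c_2^2 s}\exp\!\Big(-\frac{\eta^2+(c_1\rho-\gamma(\sqrt{\dhat}\eta)/\sqrt{\dhat})^2}{4s}\Big)\,\sigma(\sqrt{\dhat}\eta)\sqrt{1+\gamma'^2}\,\mathrm{d}\eta\,\mathrm{d}s .
\end{align*}
Extending the $\xi$-integral to $\mathbb{R}$ costs only an exponentially small error, as argued before the lemma.

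The next step is to Taylor expand in $\sqrt{\dhat}$ with $c_1,c_2$ held fixed. We use $\gamma(\sqrt{\dhat}\eta)/\sqrt{\dhat}=\sqrt{\dhat}\,\kb\eta^2/2+\dhat\,\yt\eta^3/6+\dhat^{3/2}\yf\eta^4/24+\dots$, the Taylor series of $\sigma$, and $\sqrt{1+\gamma'^2}=1+\dhat\kb^2\eta^2/2+\dots$. Expanding the exponential about the flat-boundary Gaussian $e^{-(\eta^2+c_1^2)/(4s)}$ turns the integrand into that Gaussian times a polynomial in $\eta$, $c_1\rho$ and $1/s$, ordered by powers of $\sqrt{\dhat}$. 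Odd powers of $\eta$ vanish by symmetry, and $\rho^2=1$ lets us collect terms into those with no $\rho$ and those linear in $\rho$. This explains why $\sn'$ and $\yt$ enter only through products such as $\sn'\yt$. The $\eta$-integrals are Gaussian moments, $\int\eta^{2k}e^{-\eta^2/4s}\,\mathrm{d}\eta=c_k s^{k+1/2}$. After this step each order is a finite combination of one-dimensional integrals
\begin{align*}
I_p=\int_0^1 s^{p}\,e^{-c_2^2 s-c_1^2/(4s)}\,\mathrm{d}s
\end{align*}
with half-integer $p$, multiplied by powers of $c_1$.

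We then evaluate the $I_p$ in closed form. The base case $p=-1/2$ is the classical identity
\begin{align*}
\int_0^1 s^{-1/2}e^{-c_2^2 s-c_1^2/(4s)}\,\mathrm{d}s=\frac{\sqrt{\pi}}{2c_2}\Phi_- ,
\end{align*}
which one checks by differentiating the erfc combinations in $c_1$ and $c_2$, or by substituting $u=\sqrt{s}$ and completing squares. The other indices follow from differentiating with respect to $c_2$ (this lowers $p$ by one and brings in $\Phi_+$ and the boundary term $Z$ from $s=1$) or with respect to $c_1$ (this raises $p$), and from integration by parts. The leading term gives $\sqrt{\dhat}\sn P_0/4$ directly. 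We then check that the $\mathcal{O}(\dhat)$ term, which comes from the cross term $-c_1\rho\kb\eta^2/(4s)$, reduces to $c_1\Phi_-/c_2$.

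We expect the main obstacle to be the bookkeeping at orders $\dhat^{3/2}$ and $\dhat^2$. There the square of the curvature cross term, the $\eta^4\kb^2/(32 s^2)$ term, the arclength correction and the $\sn''$ term all contribute integrals with $p=1/2$ and $p=-3/2$. These must be recombined exactly into the stated $Q_0+3P_2$, $Q_0$ and $-3Q_1+5P_3$ structure. To do this we first derive recursion identities expressing $I_{1/2}$ and $I_{-3/2}$ in terms of $\Phi_\pm$ and $Z$, namely $I_{1/2}\propto(\Phi_--Z)/c_2^3+c_1\Phi_+/c_2^2$ plus $P$-type terms. Only then do we collect coefficients order by order. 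We would verify the final coefficients numerically against direct quadrature of \eqref{sla}. The remainder is $\mathcal{O}(\dhat^{5/2})$ because the next odd-in-$\eta$ terms vanish and the Taylor remainders are uniformly integrable against the Gaussian.
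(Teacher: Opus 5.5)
Your plan is correct and is essentially the paper's proof. Your rescaling $t=\dhat s$, $\xi=\sqrt{\dhat}\,\eta$ is equivalent to the paper's $z=\sqrt{4t}$, $u=\xi/z$ (i.e.\ $u=\eta/(2\sqrt{s})$); both then Taylor expand geometry, density, exponential and arclength with $r=c_1\sqrt{\dhat}$ treated as small (left implicit in the paper), discard odd Gaussian moments, and evaluate the remaining one-dimensional time integrals in closed form via $\erfc$.

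Some corrections to your bookkeeping. Through order $\dhat^2$ only $c_1^m I_{-1/2}=\tfrac{\sqrt{\pi}}{2}P_m$ and $c_1^m I_{1/2}$ occur, so no $I_{-3/2}$ appears; also $\partial_{c_2}I_p=-2c_2I_{p+1}$ raises the index and $\partial_{c_1}I_p=-\tfrac{c_1}{2}I_{p-1}$ lowers it, the reverse of what you wrote. Moreover $I_{1/2}=-\tfrac{1}{2c_2}\partial_{c_2}I_{-1/2}=\tfrac{\sqrt{\pi}}{4}\left[(\Phi_- - c_2Z)/c_2^3+c_1\Phi_+/c_2^2\right]$ exactly, with no extra $P$-type terms. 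The $Z$ in the stated $Q_m$ must therefore carry an extra factor $c_2$, consistent with the paper's on-boundary corollary and the remark that $Q_m=\mathcal{O}(1)$.
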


\begin{proof}
Introducing the change of variables $z=\sqrt{4t}$ and $u=\xi/z$ in \eqref{sla}, we obtain
\begin{align}\label{sla_var}
\slp_{L,A}[\sigma](\bfx)
= \frac{1}{2\pi} \int_0^{2\sqrt{\dhat}} \int_{-\infty}^\infty
e^{-\alpha^2 z^2/4} e^{-u^2}
e^{-(\rho r - \gamma(uz))^2/z^2}
\sigma(uz)
\sqrt{1+(\gamma'(uz))^2}
\mathrm{d}u\mathrm{d}z.
\end{align}
The local boundary approximation and the layer density are expanded around the closest boundary point $\bfx_0$, which yields
\begin{align}
    \gamma(\xi) &\approx \frac{1}{2}\kappa_0 \xi^2 + \frac{1}{6}\gamma_0^{(3)}\xi^3+\frac{1}{24}\gamma_0^{(4)}\xi^4+\dots\label{x2_exp}\\
    \sigma(\xi) &\approx \sigma_0 + \sigma_0' \xi + \frac{1}{2}\sigma_0'' \xi^2 + \dots.
\end{align}
Inserting these expressions in \eqref{sla_var} and further expanding the exponential and arc length factor in powers of $z$, the integrand becomes a sum of terms of the form $u^k e^{-u^2}$ multiplied by polynomials in $z$. Odd terms vanish after integration over $u$, and the remaining terms can be evaluated explicitly. This reduces the expression to a one-dimensional integral in $z$, which can be computed in closed form in terms of exponentials and complementary error functions.

Using $z = \mathcal{O}\left(\sqrt{\delta_*}\right)$, and introducing the quantities in the statement of the lemma, finishes the proof.
\end{proof}

\begin{remark}\label{PmQm}
    The quantities
    \begin{align}
        P_m, &\qquad m=0,\dots,3 \label{Pm}\\
        Q_m, &\qquad m=0,1 \label{Qm}
    \end{align}
    are $\mathcal{O}(1)$ with respect to $\dhat$, assuming $\alpha$ is fixed. To see this, we Taylor expand the definitions of $P_m$ and $Q_m$ around $c_2 = 0$ to obtain
    \begin{align}
        P_m &=
        c_1^m\frac{\Phi_-}{c_2}
        =
        2c_1^m(G-c_1E)
        +
        \mathcal{O}(c_2^2),
        \qquad
        m=0,1,2,3, \label{Pmexp}\\
        Q_m & =
        \frac{2}{3}c_1^m
        \left[
            (2-c_1^2)G+c_1^3E
        \right]
        +
        \mathcal{O}(c_2^2),
        \qquad
        m=0,1, \label{Qmexp}
        \end{align}
    where 
    \begin{align}\label{defE}
        E(c_1) = \erfc\left(\frac{c_1}{2}\right),
        \qquad
        G(c_1) = \frac{2}{\sqrt{\pi}}e^{-c_1^2/4}.
    \end{align}
    Furthermore, we note that $E$ and $G$ have decayed to machine precision for $c_1 > 12$, so we can assume that $c_1 \in [0,12]$. We have that \(c_1^mE(c_1)\) and \(c_1^mG(c_1)\) are bounded on this interval, for $m=0,\dots,4$. Therefore the leading terms in the expansions \eqref{Pmexp} and \eqref{Qmexp} are bounded in $c_1$. Consequently,
    \begin{align}
        P_m=\mathcal{O}(1),
        \quad m=0,1,2,3, \qquad         Q_m=\mathcal{O}(1),
        \quad m=0,1,
    \end{align}
    with respect to $\dhat$.    
\end{remark}

\begin{remark}
\label{rem:locality}
    If the target point $\bfx$ satisfies $r\geq \sqrt{\delta_*}(12+2\alpha\sqrt{\delta_*})$, then $c_1/2-c_2\geq 6$. Consequently, the complementary error functions in the expressions for $\Phi_-$ and $\Phi_+$ are at machine precision level, as is $Z$. Therefore, the contribution from $\slp_{L,A}[\sigma](\bfx)$ is negligible for such target points, and we only need to evaluate $\slp_{L,A}$ at points within this distance from the boundary. 
\end{remark}

\noindent The following corollary is immediate.

\begin{cor}
    For target points $\bfx \in \partial \Omega$, the expression \eqref{slp_local1} simplifies to
    \begin{align}\label{slp_local2}
        \slp_{L,A}[\sigma](\bfx) & = \sqrt{\dhat}\frac{\sigma_0}{2c_2}\erf(c_2)+\dhat^{3/2}\frac{1}{c_2^3}\left(\frac{\kb^2\sn}{32}+\frac{\sn''}{8}\right)\left(2\erf(c_2)-\frac{4c_2}{\sqrt{\pi}}e^{-c_2^2}\right)+\mathcal{O}(\dhat^{5/2}).
    \end{align}
\end{cor}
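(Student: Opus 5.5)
The plan is to specialize Lemma~\ref{lem1_slp} directly. For a target $\bfx\in\partial\Omega$ the closest boundary point is $\bfx_0=\bfx$ itself, so $r=0$ and hence $c_1=r/\sqrt{\dhat}=0$. The quantity $\rho$ is then formally undefined, but it always appears multiplied by $P_1$ or $Q_1$, which carry a factor $c_1$. I would therefore argue that every term in \eqref{slp_local1} that is odd in $\rho$, or carries a positive power of $c_1$, vanishes.

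\textbf{Step 1 (vanishing terms).} From $P_m=c_1^m\Phi_-/c_2$ and $Q_m=c_1^m[\cdots]$, setting $c_1=0$ gives $P_1=P_2=P_3=Q_1=0$. This removes the whole $\dhat$ term, the whole $\dhat^2$ term, and the $3P_2$ contribution inside the $\dhat^{3/2}$ term. What remains is
\begin{align*}
\slp_{L,A}[\sigma](\bfx)=\sqrt{\dhat}\,\frac{\sn}{4}P_0+\dhat^{3/2}\left(\frac{\kb^2\sn}{32}+\frac{\sn''}{8}\right)Q_0+\mathcal{O}(\dhat^{5/2}).
\end{align*}

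\textbf{Step 2 (evaluate $P_0$ and $Q_0$ at $c_1=0$).} At $c_1=0$ we have $\Phi_-=\erfc(-c_2)-\erfc(c_2)=2\erf(c_2)$, using $\erfc(-y)=2-\erfc(y)$. Hence $P_0=2\erf(c_2)/c_2$, which produces the first term $\sqrt{\dhat}\,\sn\erf(c_2)/(2c_2)$. For $Q_0$, the $c_1\Phi_+/c_2^2$ part vanishes, leaving $(\Phi_- - Z)/c_2^3$ evaluated at $c_1=0$.

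\textbf{Step 3 (the main obstacle: checking the normalization of $Z$).} Taken literally, $Z|_{c_1=0}=4e^{-c_2^2}/\sqrt{\pi}$ would make $Q_0$ blow up like $c_2^{-3}$ as $c_2\to0$. That contradicts Remark~\ref{PmQm}, which gives $Q_0\to\tfrac{4}{3}G(0)=8/(3\sqrt{\pi})$. The consistent reading is that $Z$ carries a factor $c_2$, i.e.\ $Z|_{c_1=0}=4c_2e^{-c_2^2}/\sqrt{\pi}$. Indeed, Taylor expansion then gives
\begin{align*}
2\erf(c_2)-\tfrac{4c_2}{\sqrt\pi}e^{-c_2^2}=\tfrac{8}{3\sqrt\pi}c_2^3+\mathcal{O}(c_2^5),
\end{align*}
which matches the remark. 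To settle this independently of the typeset constant, I would recompute the relevant piece directly from \eqref{sla_var} with $r=0$. The $\dhat^{3/2}$ coefficient comes from the $z$-integral
\begin{align*}
\int_0^{2\sqrt{\dhat}} z^2 e^{-\alpha^2z^2/4}\,\mathrm{d}z .
\end{align*}
Substituting $z=2y/\alpha$ and integrating by parts, this equals
\begin{align*}
\frac{8}{\alpha^3}\left(\frac{\sqrt\pi}{4}\erf(c_2)-\frac{c_2}{2}e^{-c_2^2}\right),
\end{align*}
which reproduces exactly the combination $2\erf(c_2)-4c_2e^{-c_2^2}/\sqrt\pi$ up to the constant prefactor. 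Similarly, the leading term is $\int_0^{2\sqrt{\dhat}}e^{-\alpha^2z^2/4}\,\mathrm{d}z=\sqrt{\pi}\erf(c_2)/\alpha$, which confirms $P_0$.

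Inserting $P_0$ and $Q_0$ into the reduced expression from Step 1 then gives \eqref{slp_local2}. The error remains $\mathcal{O}(\dhat^{5/2})$, inherited unchanged from Lemma~\ref{lem1_slp}.
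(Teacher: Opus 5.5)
Your proposal is correct and matches the paper, which calls the corollary immediate: setting $r=0$ gives $c_1=0$, which removes $P_1,P_2,P_3,Q_1$ and leaves $P_0=2\erf(c_2)/c_2$ and $Q_0=\bigl(2\erf(c_2)-Z|_{c_1=0}\bigr)/c_2^3$. You are also right that the printed $Z$ is missing a factor $c_2$ (integrating $z^2e^{-\alpha^2z^2/4-r^2/z^2}$ by parts produces the boundary term $Z=4c_2e^{-c_1^2/4-c_2^2}/\sqrt{\pi}$), and your direct check from \eqref{sla_var} at $r=0$ confirms the stated formula with the correct prefactors.
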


\vspace{0.5em}
\noindent\textbf{Double layer potential.} The derivation for the double layer potential is analogous.

\begin{lem}
   Given the definitions as in Lemma \ref{lem1_slp}, furthermore define
    \begin{align*}
        U_m := c_1^m\Phi_+,
        \qquad
        m=0,\ldots,4.
    \end{align*}
    Then we can write \eqref{dla} as
    \begin{align}\label{dlp_local1}
    \begin{split}
    D_{L,A}[\mu](\bfx) =& -\frac{\rho\mu_0}{4}U_0-\sqrt{\dhat}\frac{\kb\mu_0}{8}(P_0+U_1)-\dhat \left(\frac{\rho\mu_0''}{8}P_1+\frac{3\rho\kb^2\mu_0}{32}(P_1+U_2)\right)\\
    &+\dhat^{3/2}\left[\left(\frac{5\kb^3\mu_0}{64}-\frac{\mu_0\yf}{32}\right)(3Q_0-U_3) +\frac{3\kb\mu_0''}{16}(-Q_0-P_2)+\frac{\mu_0'\yt}{8}(-2Q_0-P_2)\right]
    \\
    &+\dhat^2\left[\frac{35\rho\kb^4\mu_0}{512}(9Q_1+2P_3-U_4)-\frac{\rho\mu_0^{(4)}}{32}Q_1-\frac{15\rho\kb^2\mu_0''}{64}(Q_1+P_3)\right]\\
    &-\dhat^2\left[\frac{5\rho\kb\mu_0'\yt}{16}(2Q_1+P_3)+\left( \frac{5\rho\kb\mu_0\yf}{64} +\frac{5\rho\mu_0(\yt)^2}{96}\right)(3Q_1+P_3)\right] + \mathcal{O}(\dhat^{5/2}).
    \end{split}
    \end{align}
where $\mu_0 = \mu(\bfx_0)$, and $\mu'_0 = \mu'(\bfx_0)$, $\mu''_0 = \mu''(\bfx_0)$ and $\mu_0^{(4)} = \mu^{(4)}(\bfx_0)$ in tangent line parameterization.
\end{lem}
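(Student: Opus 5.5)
We follow the proof of Lemma \ref{lem1_slp}, now applied to the normal derivative of the heat-type kernel. In the local frame of Figure \ref{dom_rots}(b), with $\bfx_0$ at the origin, the target is $\bfx=(0,\rho r)$ and the boundary point is $\bfx'=(\xi,\gamma(\xi))$. The outward normal times arclength is $\boldsymbol{\nu}(\bfx')\,\mathrm{d}s = (-\gamma'(\xi),1)\,\mathrm{d}\xi$, up to the orientation convention fixed by $\boldsymbol{\nu}_0$. Since $\partial_{\nu_{\bfx'}}H(t,\bfx-\bfx') = \frac{(\bfx-\bfx')\cdot\boldsymbol{\nu}'}{2t}H(t,\bfx-\bfx')$, the arclength factor $\sqrt{1+\gamma'^2}$ cancels. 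The integrand of \eqref{dla} is therefore
\begin{align*}
\frac{e^{-\alpha^2 t}}{8\pi t^2}\, e^{-\xi^2/(4t)}\, e^{-(\rho r-\gamma(\xi))^2/(4t)}\,\bigl(\rho r-\gamma(\xi)+\xi\gamma'(\xi)\bigr)\,\mu(\xi).
\end{align*}
After the substitutions $z=\sqrt{4t}$ and $u=\xi/z$ of Lemma \ref{lem1_slp}, this becomes a double integral over $z\in[0,2\sqrt{\dhat}]$ and $u\in\mathbb{R}$. It carries the weight $e^{-\alpha^2z^2/4}e^{-u^2}e^{-\rho^2r^2/z^2}$ and the prefactor $z^{-2}$ times $(\rho r-\gamma(uz)+uz\gamma'(uz))$. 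The locality assumption (Remark \ref{rem:locality}) justifies extending the $\xi$-integration to $\mathbb{R}$.

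Next we insert the Taylor expansions \eqref{x2_exp} of $\gamma$ and of $\mu$ about $\xi=0$. Note that $-\gamma+\xi\gamma' = \tfrac12\kb\xi^2+\tfrac13\yt\xi^3+\tfrac18\yf\xi^4+\dots$. We also expand the cross term $e^{2\rho r\gamma(uz)/z^2}e^{-\gamma(uz)^2/z^2}$ in powers of $z$. Because $\gamma(uz)/z^2=\mathcal{O}(1)$, the combination $\rho r\gamma/z^2$ contributes terms of the form $(\rho r/z)\cdot z^k$. To track orders consistently, we treat $r/z$ as an $\mathcal{O}(1)$ variable, since $r\lesssim\sqrt{\dhat}$ by locality. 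Terms with odd powers of $u$ vanish, and the even moments $\int u^{2k}e^{-u^2}\,\mathrm{d}u$ are explicit. Up to the required order, we collect the coefficients of $\mu_0,\mu_0',\mu_0'',\mu_0^{(4)}$ times monomials in $\kb,\yt,\yf$.

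What remains is a finite family of one-dimensional integrals
\begin{align*}
I_{n,p}=\int_0^{2\sqrt{\dhat}} z^{n}(\rho r)^p\, e^{-\alpha^2z^2/4-r^2/z^2}\,\mathrm{d}z.
\end{align*}
Here $\rho^2=1$ is used in the exponent, and $n$ may be negative, e.g. $n=-2$ for the leading term. The key step is to evaluate these in closed form. For the basic integral $\int_0^{a} z^{-2}e^{-\alpha^2z^2/4-r^2/z^2}\,\mathrm{d}z$ we use the classical identity obtained from $e^{\mp\alpha r}$ times the derivatives of $\erfc(r/z\pm\alpha z/2)$. This produces exactly $\Phi_\pm$ evaluated at $z=2\sqrt{\dhat}$, with arguments $c_1/2\mp c_2$, and for example it gives the leading term $-\tfrac{\rho\mu_0}{4}U_0$. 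The remaining $I_{n,p}$ we reduce by integration by parts, or by differentiating in $\alpha^2$ and $r^2$, to the basic ones plus boundary terms. The boundary terms produce $Z$. Rewriting everything in $c_1,c_2$ and pulling out powers of $\sqrt{\dhat}$ then yields the combinations $U_m$, $P_m$ and $Q_m$.

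The main obstacle is the bookkeeping. Each $\dhat^{k/2}$ coefficient receives contributions from several sources: the geometric factor, the density expansion, and the expanded cross exponential, with its factors $(\rho r/z)^j$ that shift the index $m$. The reduction of each $I_{n,p}$ must be checked to land on exactly the combinations $3Q_0-U_3$, $9Q_1+2P_3-U_4$ and so on. We would carry this out symbolically, organized by powers of $z$, and verify the result against direct quadrature of \eqref{dla}. The $\mathcal{O}(1)$ bounds of Remark \ref{PmQm}, together with the analogous bound for $U_m$, guarantee that the truncation error is $\mathcal{O}(\dhat^{5/2})$.
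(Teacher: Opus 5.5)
Your outline matches the paper's proof. The paper only says to repeat the argument of Lemma~\ref{lem1_slp} (substitution $z=\sqrt{4t}$, $u=\xi/z$, Taylor expansion of $\gamma$ and $\mu$, vanishing of odd $u$-moments, closed-form $z$-integrals via $\erfc$ and exponentials) with the extra factor $(\bfx-\bfx')\cdot\boldsymbol{\nu}'/(2t)$ from differentiating $H$. Carrying out the bookkeeping, you will find three places where your computation will not reproduce the formula as written; all three come from the statement, not from your method:

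\begin{enumerate}
\item With $\boldsymbol{\nu}'\,\mathrm{d}s=(-\gamma',1)\,\mathrm{d}\xi$ and the target at $(0,\rho r)$, the leading term is $+\tfrac{\rho\mu_0}{4}U_0$, not $-\tfrac{\rho\mu_0}{4}U_0$. The stated signs correspond to the frame whose second axis is the inward normal, where $\boldsymbol{\nu}'\,\mathrm{d}s=(\gamma',-1)\,\mathrm{d}\xi$ and $\rho=+1$ on the interior side. This is also what \eqref{jumprel} requires.
\item The identity $\int_0^{2\sqrt{\dhat}}z^2e^{-\alpha^2z^2/4-r^2/z^2}\,\mathrm{d}z=\sqrt{\pi}\,\dhat^{3/2}Q_0$ holds only with $c_2Z$ in place of $Z$ in the definition of $Q_m$. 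This corrected form is the one consistent with the corollaries and Remark~\ref{PmQm}.
\item The pure $\mu_0\yf$ term at order $\dhat^{3/2}$ comes solely from $\tfrac18\yf\xi^4$ in the geometric factor and $\tfrac1{12}\rho r\yf u^4z^2$ in the cross exponential. It is therefore proportional to $3Q_0+P_2$, not $3Q_0-U_3$; the two agree only at $c_1=0$.
\end{enumerate}
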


\begin{proof}
    The proof follows the same steps as for Lemma \ref{lem1_slp}, with the additional differentiation of the kernel.
\end{proof}

\begin{remark}
    $U_m$ is $\mathcal{O}(1)$ with respect to $\dhat$, for $m=0,\dots,4$. To see this, consider the Taylor expansion around $c_2 = 0$, 
    \begin{align*}
        U_m = 2c_1^mE+\mathcal{O}(c_2^2),
    \end{align*}
    with $E$ defined as in \eqref{defE}, where $c_1^m E$ is bounded, as discussed in Remark \ref{PmQm}.
\end{remark}

\begin{cor}
For any target point $\bfx \in \partial \Omega$, the expression \eqref{dlp_local1} simplifies to
\begin{align}\label{dlp_local2}
\begin{split}
    \mathcal{D}_{L,A}[\mu](\bfx) = &-\sqrt{\dhat}\frac{\kappa_0\mu_0}{4c_2}\erf(c_2)\\
    &+\dhat^{3/2}\frac{1}{c_2^3}\left(\frac{15\kb^3\mu_0}{64}-\frac{3\mu_0\yf}{32}-\frac{3\kb\mu_0''}{16}-\frac{\mu_0'\yt}{4}\right)\left(2\erf(c_2)-\frac{4c_2}{\sqrt{\pi}}e^{-c_2^2}\right)+\mathcal{O}(\dhat^{5/2}).
\end{split}
\end{align}
\end{cor}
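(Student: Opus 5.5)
The plan is to specialize \eqref{dlp_local1} directly to $\bfx\in\partial\Omega$. In that case $\bfx=\bfx_0$, so $r=0$ and hence $c_1=r/\sqrt{\dhat}=0$, while $c_2=\alpha\sqrt{\dhat}$ is unchanged. For an on-surface target we also have $(\bfx-\bfx_0)\cdot\boldsymbol{\nu}_0=0$. In the derivation of Lemma \ref{lem1_slp} and its double layer analogue, $\rho$ only enters through the normal offset $\rho r$ in the Gaussian factor $e^{-(\rho r-\gamma(uz))^2/z^2}$, so on the boundary $\rho$ is to be read as $0$. This corresponds to the direct on-surface value, with the jump \eqref{jumprel} not included. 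Consequently, every term in \eqref{dlp_local1} carrying a factor $\rho$ drops out. This removes the $\mathcal{O}(1)$ term $-\rho\mu_0U_0/4$, the whole $\dhat$ term and both $\dhat^2$ brackets, leaving only the $\sqrt{\dhat}$ and $\dhat^{3/2}$ contributions.

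Next I will evaluate the building blocks at $c_1=0$. Since $e^{\pm c_1c_2}=1$, we get $\Phi_-=\erfc(-c_2)-\erfc(c_2)=2\erf(c_2)$ and $\Phi_+=2$. Every quantity carrying a positive power $c_1^m$ vanishes, so $U_m=0$ for $m\ge1$, $P_m=0$ for $m\ge1$, and the $c_1\Phi_+/c_2^2$ part of $Q_0$ vanishes. This gives $P_0=2\erf(c_2)/c_2$. Inserting into $-\sqrt{\dhat}\frac{\kb\mu_0}{8}(P_0+U_1)$ yields the first term $-\sqrt{\dhat}\frac{\kb\mu_0}{4c_2}\erf(c_2)$.

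For the $\dhat^{3/2}$ bracket, with $P_2=U_3=0$ the three groups become $3Q_0\bigl(\frac{5\kb^3\mu_0}{64}-\frac{\mu_0\yf}{32}\bigr)$, $-Q_0\frac{3\kb\mu_0''}{16}$ and $-2Q_0\frac{\mu_0'\yt}{8}$. Collecting them gives $Q_0\bigl(\frac{15\kb^3\mu_0}{64}-\frac{3\mu_0\yf}{32}-\frac{3\kb\mu_0''}{16}-\frac{\mu_0'\yt}{4}\bigr)$, which is exactly the coefficient in \eqref{dlp_local2}.

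The remaining, and only delicate, step is to identify $Q_0|_{c_1=0}$ with $\frac{1}{c_2^3}\bigl(2\erf(c_2)-\frac{4c_2}{\sqrt{\pi}}e^{-c_2^2}\bigr)$. That is, at $c_1=0$ the $Z$-term must reduce to $\frac{4c_2}{\sqrt{\pi}}e^{-c_2^2}$. I will verify this against the small-$c_2$ expansion \eqref{Qmexp} of Remark \ref{PmQm}. That expansion predicts $Q_0(0)=\frac{2}{3}\cdot 2G(0)=\frac{8}{3\sqrt{\pi}}+\mathcal{O}(c_2^2)$. Expanding $\bigl(2\erf(c_2)-\frac{4c_2}{\sqrt\pi}e^{-c_2^2}\bigr)/c_2^3$ gives $\frac{1}{c_2^3}\bigl(\frac{4}{\sqrt\pi}(c_2-\frac{c_2^3}{3})-\frac{4}{\sqrt\pi}(c_2-c_2^3)\bigr)+\mathcal{O}(c_2^2)=\frac{8}{3\sqrt{\pi}}+\mathcal{O}(c_2^2)$, which is consistent. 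Without the factor $c_2$ in the $Z$-term, the expression would instead be unbounded as $c_2\to0$, contradicting Remark \ref{PmQm}.

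To be safe, I will recompute this term directly from the $z$-integral $\int_0^{2\sqrt{\dhat}} z^2 e^{-\alpha^2z^2/4}\,\mathrm{d}z$ that produces it when $r=0$. Integration by parts gives $\frac{8}{\alpha^3}\bigl(\frac{\sqrt\pi}{2}\erf(c_2)-c_2e^{-c_2^2}\bigr)$, which has the claimed form. Since the error term $\mathcal{O}(\dhat^{5/2})$ is inherited unchanged, this finishes the argument.
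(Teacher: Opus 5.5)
Your proposal is correct and is the same specialization the paper intends (the paper gives no separate proof). You set $c_1=0$, which gives $P_0=2\erf(c_2)/c_2$ and kills every term carrying a positive power of $c_1$; you remove the one surviving $\rho$-term, $-\rho\mu_0U_0/4$, by taking the direct, jump-free on-surface value; and you evaluate $Q_0$ at $c_1=0$. You are also right that this requires $Z=\frac{4c_2}{\sqrt{\pi}}e^{-c_1^2/4-c_2^2}$, as forced by Remark~\ref{PmQm} and by the direct $z$-integral, so the printed definition of $Z$ in Lemma~\ref{lem1_slp} is a typo (minor slip: that integral equals $\frac{4}{\alpha^3}\bigl(\frac{\sqrt{\pi}}{2}\erf(c_2)-c_2e^{-c_2^2}\bigr)$, not $\frac{8}{\alpha^3}(\cdots)$, which does not affect the form).
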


\vspace{0.5em}
\noindent\textbf{Volume potential.} For the local part of the volume potential, we use the corresponding local coordinate system, but with $\bfx$ placed at the origin rather than the closest boundary point $\bfx_0$. Let $\xi$ denote the tangent line direction at $\bfx_0$, and let $\eta$ denote the direction of the inward normal at $\bfx_0$. Since we do not apply dyadic refinement to the local part of the volume potential, we use $\delta_* = \delta$.

\begin{lem}
Let $\bfx \in\Omega$ and assume there is a unique closest point $\bfx_0$ on the boundary $\partial\Omega$. Let $\kappa_0$ denote the curvature of the boundary at $\bfx_0$, $f = f(\bfx)$, $f^{(0,1)}=f_\eta(\bfx)$, $f^{(0,2)}=f_{\eta\eta}(\bfx)$ and $f^{(2,0)} = f_{\xi\xi}(\bfx)$. Use the same definitions as in Lemma \ref{lem1_slp}, and let
\begin{align*}
    \lambda &= e^{-c_2^2}\erfc\left(\frac{c_1}{2}\right),\\
    W_0 &= \frac{1}{c_2^2}\left(4-4e^{-c_2^2}+2\lambda-\Phi_+\right),\\
    W_1 &= \frac{1}{c_2^2}\left(2W_0-P_1+4(\lambda-2e^{-c_2^2})\right).
\end{align*}
Then \eqref{vl} can be written
\begin{align}\label{vp_local1}
\begin{split}
    V_L[f](\bfx) &= \delta_*\frac{f}{4}W_0+\delta_*^{3/2}\frac{2f^{(0,1)}-f\kappa_0}{8}Q_0\\
    &+\delta_*^2\left(\left(\frac{f^{(0,1)}\kappa_0}{8}-\frac{3f\kappa_0^2}{32}-\frac{f^{(0,2)}}{8}\right)Q_1+\frac{f^{(0,2)}+f^{(2,0)}}{8}W_1\right)+\mathcal{O}(\delta_*^{5/2}).
\end{split}
\end{align}
\end{lem}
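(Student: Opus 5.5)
The plan follows the proof of Lemma \ref{lem1_slp}, but with a half-plane-type domain instead of a curve. In the volume coordinates, $\bfx$ sits at the origin, $\xi$ runs along the tangent at $\bfx_0$, and $\eta$ points along the inward normal. The boundary is then locally the graph $\eta = -r + \tfrac{\kappa_0}{2}\xi^2 + \mathcal{O}(\xi^3)$, with the domain lying above it (the sign convention on $\kappa_0$ being fixed consistently with \eqref{x2_exp}).

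\textbf{Reduction.} I will first insert $H$ into \eqref{vl} and use that $\int_0^{\delta_*}\int_{\mathbb{R}^2}H\,\mathrm{d}\bfx'\,\mathrm{d}t$ is explicit. I then write
\begin{align*}
\vp_L[f](\bfx)=\int_0^{\delta_*}\int_{\mathbb{R}^2}H f\,\mathrm{d}\bfx'\mathrm{d}t-\int_0^{\delta_*}\int_{\Omega^c}H f\,\mathrm{d}\bfx'\mathrm{d}t,
\end{align*}
where $f$ is understood through its local Taylor expansion at $\bfx$:
\begin{align*}
f(\xi,\eta)\approx f+f^{(1,0)}\xi+f^{(0,1)}\eta+\tfrac12 f^{(2,0)}\xi^2+f^{(1,1)}\xi\eta+\tfrac12 f^{(0,2)}\eta^2 .
\end{align*}

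\textbf{Free-space term.} The Gaussian moments give $\int_{\mathbb{R}^2}H\,(1,\xi^2,\eta^2)\,\mathrm{d}\bfx'=e^{-\alpha^2t}(1,2t,2t)/(4\pi)\cdot 4\pi$. Integrating in $t$ gives $(1-e^{-c_2^2})/\alpha^2$ and the $t e^{-\alpha^2 t}$ moment. Expressed in $c_2=\alpha\sqrt{\delta_*}$, these produce the $4-4e^{-c_2^2}$ and $-8e^{-c_2^2}$ pieces in $W_0$ and $W_1$. They also account for the symmetric $(f^{(0,2)}+f^{(2,0)})W_1$ term.

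\textbf{Exterior term.} For the subtracted exterior region I use $z=\sqrt{4t}$ and $u=\xi/z$ as in \eqref{sla_var}. The $\eta$-integral runs from $-\infty$ to the boundary graph. I will integrate it in closed form against the polynomial in $\eta$, which yields $\erfc$ of $(r-\tfrac{\kappa_0}{2}u^2z^2)/z$. I then expand in $z$ around $\erfc(r/z)$: the curvature correction produces a Gaussian factor times $u^2z$, and odd $u$-moments vanish. This leaves one-dimensional $z$-integrals of the types $\int e^{-\alpha^2z^2/4}z^k\erfc(r/z)\,\mathrm{d}z$ and $\int e^{-\alpha^2 z^2/4-r^2/z^2}z^k\,\mathrm{d}z$.

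The second type is exactly what produced $\Phi_\pm$ and $Z$ in Lemma \ref{lem1_slp}, so I can reuse those closed forms; in particular $Q_0$ and $Q_1$ reappear. The $\erfc$-type integrals I will handle by integrating by parts in $z$, which moves the derivative onto $\erfc$ and reduces them to the Gaussian type. The boundary term at $z=2\sqrt{\delta_*}$ is what creates $\lambda=e^{-c_2^2}\erfc(c_1/2)$.

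\textbf{Collecting terms.} Finally I group the terms by powers of $\sqrt{\delta_*}$ and cut at $\mathcal{O}(\delta_*^{5/2})$. Along the way I eliminate $\erfc$ combinations in favour of $\Phi_+$ and $P_1$ to match the stated $W_0$, $W_1$. The $f^{(1,0)}$ and $f^{(1,1)}$ terms drop by $\xi\mapsto-\xi$ symmetry at the orders retained.

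\textbf{Main obstacle.} The hardest step will be the integration by parts and the bookkeeping of the $\erfc(r/z)$ integrals with the $e^{-\alpha^2z^2/4}$ weight. Everything must land in the closed-form combinations $W_0$ and $W_1$, with their $1/c_2^2$ prefactors and removable singularities at $c_2=0$. I will verify the outcome by checking the $c_2\to0$ limit against the Poisson case of \cite{fryklund2026lightweight}.
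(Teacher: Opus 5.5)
Your proposal is correct and is essentially the paper's proof: the same substitution $z=\sqrt{4t}$, $u=\xi/z$, $v=\eta/z$, the same local graph and Taylor expansions, closed-form $v$-integration giving $\erfc$ and Gaussian factors, vanishing odd $u$-moments, and closed-form $z$-integrals. Your split into $\mathbb{R}^2$ minus the exterior is only a rebookkeeping of the paper's $v$-integral over $[\gamma(uz)/z,\infty)$ via $\erfc(-x)=2-\erfc(x)$; it does make \eqref{vp_local3} immediate, and the integration by parts produces $\lambda$, $\Phi_+$ and $P_1$ as you predict. One caution when matching terms: carrying out the Gaussian-type integral gives $\int_0^{2\sqrt{\delta_*}}z^2e^{-\alpha^2z^2/4-r^2/z^2}\,\mathrm{d}z=\sqrt{\pi}\,\delta_*^{3/2}\bigl[(\Phi_--c_2Z)/c_2^3+c_1\Phi_+/c_2^2\bigr]$, so the $Q_m$ that appears has $c_2Z$ in place of the printed $Z$, which is the version consistent with the corollaries and with the remark that $Q_m=\mathcal{O}(1)$.
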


\begin{proof}
    In the integral representation of the local part of the volume potential \eqref{vl}, we introduce the change of variables $z = \sqrt{4t}$, $u=x_1/z$ and $v = x_2/z$. Using the local approximation of the boundary, we can write \eqref{vl} as
    \begin{align}\label{vl_uvz}
        \vp_L[f](\bfx) &= \frac{1}{2\pi}\int_0^{2\sqrt{\delta_*}}\int_{-\infty}^\infty \int_{\frac{\gamma(uz)}{z}}^\infty ze^{-\frac{\alpha^2z^2}{4}}e^{-(u^2+v^2)}f(uz,vz)\mathrm{d}v\mathrm{d}u\mathrm{d}z.
    \end{align}
    A Taylor expansion of the source function $f$ around $\bfx$ is given by
    \begin{align}
        f(\bfx') = f+f_\xi\xi+f_\eta\eta+\frac{1}{2}f_{\xi\xi}\xi^2 + f_{\xi\eta}\xi\eta + \frac{1}{2}f_{\eta\eta}\eta^2 + \dots
    \end{align}
    We also Taylor expand $\gamma(uz)$ around $z=0$, and obtain
    \begin{align}
        \frac{\gamma(uz)}{z} = -\frac{\rho r}{z}+\frac{1}{2}\kappa_cu^2z+\frac{1}{6}\frac{\kappa'_c}{\lVert \gamma'_c\rVert}u^3z^2+\frac{1}{24}\gamma^{(4)}(0)u^4z^3+\frac{1}{120}\gamma^{(5)}(0)u^5z^4+\mathcal{O}(z^5),
    \end{align}
    since our system is such that $\gamma(0) = -r$ and $\gamma'(0) = 0$. 

    Similarly, we expand the functions
    \begin{align}
        e^{-(\gamma(uz)/z)^2},\qquad \erfc\left(\frac{\gamma(uz)}{z}\right),
    \end{align}
    which appear after performing the $v$-integration. 

    Inserting all expansions into the integral \eqref{vl_uvz} reduces the problem to Gaussian-weighted integrals in $u$, which can be evaluated analytically. This reduces $\vp_L[f](\bfx)$ to a one-dimensional integral in $z$, which can be computed analytically to render the final result. 
\end{proof}

\begin{remark}
    The quantities $W_m$, $m=0,1$ are $\mathcal{O}(1)$ with respect to $\delta$, since Taylor expansion around $c_2 = 0$ gives
    \begin{align}
         W_0 = 4-(2+c_1^2)E+c_1G+\mathcal{O}(c_2^2)
    \end{align}
    and
    \begin{align}
        W_1=4-2E+\frac{c_1G}{3}+\frac{c_1^4E-c_1^3G}{6}+\mathcal{O}(c_2^2),
    \end{align}
    with $E$ and $G$ defined as in \eqref{defE}. 
\end{remark}

\begin{cor}
    For target points $\bfx \in \partial \Omega$, \eqref{vp_local1} simplifies to
    \begin{align}\label{vp_local2}
    \begin{split}
        \mathcal{V}_L[f](\bfx) =& \frac{\delta_* f}{2c_2^2}(1-e^{-c_2^2})+\delta_*^{3/2}\frac{2f^{(0,1)}-f\kb}{8c_2^3}\left(2\erf(c_2)-\frac{4c_2}{\sqrt{\pi}}e^{-c_2^2}\right)\\
        &+ \delta_*^2\frac{f^{(0,2)}+f^{(2,0)}}{2c_2^4}(1-(1+c_2^2)e^{-c_2^2}) +\mathcal{O}(\delta_*^{5/2}).
    \end{split}
    \end{align}
\end{cor}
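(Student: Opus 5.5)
The plan is to obtain \eqref{vp_local2} by specializing \eqref{vp_local1} to $r=0$, rather than by redoing the expansion. For a target $\bfx\in\partial\Omega$ the closest point is $\bfx_0=\bfx$, so $r=0$ and $c_1=r/\sqrt{\delta_*}=0$, while $c_2=\alpha\sqrt{\delta_*}$ is unchanged. The quantity $\rho$ is undefined at $r=0$, but it never appears in \eqref{vp_local1}. The only $c_1$-dependent objects there are $W_0$, $Q_0$, $Q_1$ and $W_1$, so I evaluate each of them at $c_1=0$ in turn.

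\textbf{Evaluating the building blocks.} Using $\erfc(-c_2)=2-\erfc(c_2)$:
\begin{align*}
\Phi_+\big|_{c_1=0}&=\erfc(-c_2)+\erfc(c_2)=2, & \Phi_-\big|_{c_1=0}&=\erfc(-c_2)-\erfc(c_2)=2\erf(c_2),\\
\lambda\big|_{c_1=0}&=e^{-c_2^2}, & P_1\big|_{c_1=0}&=0.
\end{align*}
Hence
\begin{align*}
W_0&=\frac{4-4e^{-c_2^2}+2e^{-c_2^2}-2}{c_2^2}=\frac{2\left(1-e^{-c_2^2}\right)}{c_2^2},\\
W_1&=\frac{2W_0-4e^{-c_2^2}}{c_2^2}=\frac{4\left(1-(1+c_2^2)e^{-c_2^2}\right)}{c_2^4}.
\end{align*}
Multiplying by the coefficients $\delta_* f/4$ and $\delta_*^2(f^{(0,2)}+f^{(2,0)})/8$ gives exactly the first and third terms of \eqref{vp_local2}. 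The term $Q_1$ carries an explicit factor $c_1$, so it vanishes together with all of its $\kappa_0$-dependent coefficient. This removes the whole $Q_1$ group in the $\delta_*^2$ term.

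\textbf{The $Q_0$ term.} At $c_1=0$ the bracket in $Q_0$ reduces to $(\Phi_- - Z)/c_2^3$, since the $c_1\Phi_+$ part drops out. The target expression is $\bigl(2\erf(c_2)-\tfrac{4c_2}{\sqrt\pi}e^{-c_2^2}\bigr)/c_2^3$. This is the one place I expect friction, because the normalization of $Z$ as printed lacks a factor $c_2$.

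To settle it, I would not rely on the displayed definition. Instead I would recompute the $\delta_*^{3/2}$ contribution directly from \eqref{vl_uvz} at $r=0$, where $\gamma(uz)/z=\tfrac12\kappa_0u^2z+\dots$. The $v$-integration then gives $\tfrac{\sqrt\pi}{2}\erfc(\gamma/z)$ plus moments. The order-$z$ terms produce $\int_0^{2\sqrt{\delta_*}}z^2e^{-\alpha^2z^2/4}\,\mathrm{d}z$, which after integration by parts equals a constant times $\bigl(2\erf(c_2)-\tfrac{4c_2}{\sqrt\pi}e^{-c_2^2}\bigr)/\alpha^3$. That is the claimed form, with the prefactor $(2f^{(0,1)}-f\kappa_0)/8$ inherited from \eqref{vp_local1}.

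As a consistency check, the limit $c_2\to0$ of this expression is $8/(3\sqrt\pi)$. This agrees with Remark \ref{PmQm}, which gives $Q_0\to\tfrac43G(0)=8/(3\sqrt\pi)$ at $c_1=0$, and so confirms the factor $c_2$ in the exponential term. The error term $\mathcal{O}(\delta_*^{5/2})$ is inherited unchanged from \eqref{vp_local1}.
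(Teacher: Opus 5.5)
Your proposal is correct and follows the paper's implicit argument: set $r=0$ (so $c_1=0$) in \eqref{vp_local1}, where the $Q_1$ group vanishes and $W_0$, $W_1$, $Q_0$ reduce to the stated closed forms. You are also right that the $Q_0$ term only matches if $Z$ carries an extra factor $c_2$, i.e.\ $Z=\tfrac{4c_2}{\sqrt{\pi}}e^{-c_1^2/4-c_2^2}$, which is the normalization implicitly used in \eqref{slp_local2} and Remark~\ref{PmQm}; your direct evaluation from \eqref{vl_uvz} settles this, and carried through it gives exactly $\tfrac{2f^{(0,1)}-f\kappa_0}{8\alpha^3}\bigl(2\erf(c_2)-\tfrac{4c_2}{\sqrt{\pi}}e^{-c_2^2}\bigr)$, including the prefactor.
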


\begin{cor}
    For points at a distance $r\geq \sqrt{\delta_*}(12+2\alpha\sqrt\delta_*)$ from the boundary, some terms in \eqref{vp_local1} are negligible, leading to the expression
    \begin{align}\label{vp_local3}
        \mathcal{V}_L[f](\bfx) = \frac{\delta_* f_0}{c_2^2}\left(1-e^{-c_2^2}\right)+\delta_*^2\frac{f^{(0,2)}+f^{(2,0)}}{c_2^4}(1-(1+c_2^2)e^{-c_2^2})+\mathcal{O}(\delta_*^3).
    \end{align}
    In this case, the $\mathcal{O}(\delta_*^{3/2})$ and $\mathcal{O}(\delta_*^{5/2})$ terms are exponentially small, so the first neglected term is of order $\mathcal{O}(\delta_*^3)$.
\end{cor}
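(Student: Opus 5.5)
The plan is to prove \eqref{vp_local3} directly from the definition \eqref{vl}, not by taking limits in \eqref{vp_local1}. For targets this far from the boundary, the domain $\Omega$ can be replaced by all of $\mathbb{R}^2$, and then the local part reduces to Gaussian moment integrals. I will then cross-check the result against \eqref{vp_local1} in the regime $c_1\geq 12+2c_2$.

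\textbf{Step 1 (localization).} For $t\le\delta_*$ the spatial factor of $H(t,\cdot)$ has width $\sqrt{4t}\le 2\sqrt{\delta_*}$. The hypothesis $r\ge\sqrt{\delta_*}(12+2\alpha\sqrt{\delta_*})$ means that the excluded region $\mathbb{R}^2\setminus\Omega$ lies at least this distance from $\bfx$. Its contribution is therefore bounded by quantities like $\erfc(c_1/2-c_2)$, which are below machine precision by Remark \ref{rem:locality}. Replacing $\Omega$ by $\mathbb{R}^2$ thus introduces an exponentially small error, which I absorb into the remainder. This is precisely why the $\delta_*^{3/2}$ and $\delta_*^{5/2}$ terms vanish: they come only from the boundary geometry through $\gamma$, $\kappa_0$ and $f^{(0,1)}$.

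\textbf{Step 2 (moment expansion).} Taylor expand $f(\bfx+\mathbf{y})$ about $\bfx$ to fourth order in the local $(\xi,\eta)$ coordinates. I will use the following moments of the normalized heat kernel $\int_{\mathbb{R}^2}\frac{e^{-\norm{\mathbf{y}}^2/4t}}{4\pi t}\,\mathrm{d}\mathbf{y}$:
\begin{itemize}
\item the zeroth moment equals $1$;
\item all odd moments vanish;
\item the second moments satisfy $\int y_i^2\,(\cdot)=2t$, with the mixed moment $y_1y_2$ equal to zero;
\item the fourth moments are $\mathcal{O}(t^2)$.
\end{itemize}
This gives
\begin{align*}
\vp_L[f](\bfx)=\int_0^{\delta_*}e^{-\alpha^2t}\Big(f+t\,(f^{(2,0)}+f^{(0,2)})+\mathcal{O}(t^2)\Big)\,\mathrm{d}t .
\end{align*}

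\textbf{Step 3 (time integrals).} The two time integrals are elementary. With $c_2=\alpha\sqrt{\delta_*}$,
\begin{align*}
\int_0^{\delta_*}e^{-\alpha^2t}\,\mathrm{d}t=\frac{\delta_*}{c_2^2}\left(1-e^{-c_2^2}\right),
\qquad
\int_0^{\delta_*}t\,e^{-\alpha^2t}\,\mathrm{d}t=\frac{\delta_*^2}{c_2^4}\left(1-(1+c_2^2)e^{-c_2^2}\right).
\end{align*}
The $\mathcal{O}(t^2)$ remainder integrates to $\mathcal{O}(\delta_*^3)$ for fixed $\alpha$, with a constant involving fourth derivatives of $f$. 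Substituting these integrals yields \eqref{vp_local3}.

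\textbf{Consistency check.} In \eqref{vp_local1}, for $c_1\ge 12+2c_2$ the quantities $\Phi_\pm$, $\lambda$, $Z$ (and hence $P_1$, $Q_0$, $Q_1$) are exponentially small. It follows that $W_0\to 4(1-e^{-c_2^2})/c_2^2$ and $W_1\to 8(1-(1+c_2^2)e^{-c_2^2})/c_2^4$. Inserting these limits into \eqref{vp_local1} reproduces exactly the coefficients in \eqref{vp_local3}.

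\textbf{Main obstacle.} The delicate step is the error claim. I must show that the next nonvanishing term is genuinely $\mathcal{O}(\delta_*^3)$ rather than $\mathcal{O}(\delta_*^{5/2})$. In the free-space computation this is automatic, since only even moments survive and each contributes an integer power of $t$. The remaining work is to make the exponential smallness of the truncation in Step 1 uniform in $t\in[0,\delta_*]$, which I would do by bounding the Gaussian tail by $\erfc$ of $r/\sqrt{4t}-\alpha\sqrt t$ as in Remark \ref{rem:locality}.
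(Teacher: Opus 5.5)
Your proof is correct, but your main derivation takes a different route from the paper. The paper reads \eqref{vp_local3} off the near-boundary expansion \eqref{vp_local1}. When $c_1\ge 12+2c_2$, the erfc and Gaussian quantities are at machine-precision level, the $Q_0,Q_1$ terms drop, and $W_0,W_1$ reduce to $4(1-e^{-c_2^2})/c_2^2$ and $8(1-(1+c_2^2)e^{-c_2^2})/c_2^4$. That is exactly your consistency check.

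The paper's route gets the coefficients for free once the lemma is available. However, \eqref{vp_local1} only carries an $\mathcal{O}(\delta_*^{5/2})$ remainder, so the claim that the first neglected term is $\mathcal{O}(\delta_*^3)$ rests on higher-order terms the paper does not display. That route also inherits the lemma's local graph model of the boundary. Your free-space computation, $\int_0^{\delta_*}e^{-\alpha^2t}\bigl(f+t\Delta f+\mathcal{O}(t^2)\bigr)\,\mathrm{d}t$, is self-contained. It makes the absence of half-integer powers automatic, since only even Gaussian moments survive, and the remainder is even uniform in $\alpha$ because $e^{-\alpha^2t}\le 1$. It also avoids a subtlety in the specialization route: $W_0$, $W_1$ and $Q_m$ contain divisions by powers of $c_2$. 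So for small $c_2$ it is not enough that $\lambda$ and $\Phi_+$ are individually exponentially small. One needs that combinations such as $2\lambda-\Phi_+$ are $\mathcal{O}(c_2^2)$ times an exponentially small factor.

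The step you flag as the main obstacle is simpler than you suggest. The disk $B_r(\bfx)$ lies in $\Omega$, and the mass of $e^{-\|\mathbf{y}\|^2/4t}/(4\pi t)$ outside it is exactly $e^{-r^2/4t}$. So the truncation error is at most $\|f\|_\infty\int_0^{\delta_*}e^{-\alpha^2t-r^2/4t}\,\mathrm{d}t\le \delta_*e^{-c_1^2/4}\|f\|_\infty$, uniformly in $t$, plus analogous polynomially weighted tails for the Taylor polynomial. For this you need $f$ smooth on $B_r(\bfx)$ for the Taylor remainder and bounded on $\Omega$. No erfc bound is needed. The one you sketch is not literally an upper bound when $\alpha$ is small, because in two dimensions the radial tail is $e^{-x^2}$ with $x=r/\sqrt{4t}$, which exceeds $\erfc(x)$.
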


\begin{remark}
    For $\alpha = 0$, the equation \eqref{mod_helm} reduces to the Poisson equation. When taking the limit $c_2\to 0$ in equations \eqref{slp_local1}, \eqref{slp_local2}–\eqref{vp_local1} and \eqref{vp_local2}–\eqref{vp_local3}, they reduce to equations (38)-(44) in \cite{fryklund2026lightweight}.
\end{remark}

\subsection{Dyadic refinement}\label{sec:dyad}
The parameter $\delta$ controls the trade-off between accuracy and efficiency in the kernel decomposition. A smaller $\delta$ improves the accuracy of the local asymptotic expansion, but increases the number of Fourier modes required in the history part, making these computations costly. To overcome this, we introduce a dyadic refinement strategy based on a telescoping sum as in \cite{fryklund2026lightweight}. As described in \eqref{slp_dyad}, the local single layer potential is decomposed into an asymptotic contribution over the interval $[0,\delta_*]$ and a telescoping sum of dyadic correction terms over the intervals $[\delta/4^j, \delta/4^{j-1}]$. This approach enables the use of a small $\delta_*=\delta/4^J$ for the asymptotic part while keeping a larger, more efficient $\delta$ for the history part. 

%Each dyadic term in \eqref{sld} is evaluated using the difference kernel $\mathcal{K}_j^{\slp}$ defined in \eqref{diffker_s}. The double layer correction is treated analogously, using the difference kernel $\mathcal{K}_j^{\dlp}$ in \eqref{diffker_dlp}. These kernels have the same structure at each refinement level, but with a rescaled parameter. As $j$ increases, the kernel becomes more peaked, but the region where it must be evaluated shrinks by the same scaling. Therefore, the number of quadrature nodes needed to resolve the kernel remains the same.
Each dyadic term in \eqref{sld} is evaluated using the difference kernel $\mathcal{K}_j^{\slp}$ defined in \eqref{diffker_s}. The double layer correction is treated analogously, using the difference kernel $\mathcal{K}_j^{\dlp}$ in \eqref{diffker_dlp}. These kernels have the same structure at every refinement level, differing only by a rescaling of the argument. As $j$ increases, the kernel becomes more peaked, but its effective support shrinks proportionally, since the rapidly decaying tails may be truncated to meet a prescribed accuracy. Consequently, the number of quadrature nodes required to resolve each kernel remains constant.

The dyadic corrections are evaluated by locally refining the boundary panels close to the evaluation point through repeated bisection. At refinement level $j$, each relevant panel is split into two child panels. Each child panel is equipped with the same $n_B$-point Gauss-Legendre rule, mapped to the corresponding child interval. The geometry and density values at the child quadrature nodes are then obtained by interpolation from the polynomial representation on the parent panel. Since each bisection uses the same reference interpolation matrices, these matrices can be precomputed and reused at every refinement level. 

Since the difference kernels in \eqref{diffker_s} and \eqref{diffker_dlp} are rapidly decaying, only panels close to the target point are refined. The correction from the refined panels is then accumulated back onto the original boundary density values. Therefore the dyadic refinement does not introduce a new global boundary discretization, but is used only locally when evaluating the difference kernel corrections.

The difference kernels \eqref{diffker_s} and \eqref{diffker_dlp} contain the incomplete modified Bessel functions of the second kind, $K_n(\cdot,\cdot)$, that are difficult to evaluate efficiently to high accuracy. In this paper, we use the methods described in \cite{harris2009methods}. They yield at least ten digits of accuracy in the function evaluations, which is sufficient for our purposes. 

We find that, for the same $\delta$ parameter, the local contribution from the volume potential is typically significantly smaller than the corresponding contributions from the layer potentials, and the same is true for leading order error terms. Hence, it is possible to evaluate the volume potential with a larger $\delta$ than for the layer potentials, with a similar error contribution. Consequently, applying dyadic refinement to the volume potential is typically unnecessary, and we instead evaluate the local asymptotic expansion for the volume potential with $\delta$ instead of $\delta_*$. This is fortunate, as while the difference kernel for the volume potential is the same as for the single layer potential, evaluation at the dyadic refinement levels is more costly, since the correction becomes a two dimensional quadrature over the volumetric mesh rather than a one dimensional quadrature over boundary panels.

%In our numerical examples, we find that the local contribution from the volume potential is significantly smaller than the corresponding contributions from the layer potentials. Consequently, applying dyadic refinement to the volume potential is typically unnecessary, since it does not improve accuracy. We therefore evaluate the local part of the volume potential directly by the asymptotic expansion, with $\delta^* = \delta$, i.e. the same cutoff for the local part as for the history part. The difference kernel is the same for the volume potential as for the single layer potential, however, implementation of the dyadic refinement levels is more involved, since the correction becomes a two dimensional quadrature over the volumetric mesh rather than a one dimensional quadrature over boundary panels. This is possible in principle, but is not pursued in the present work. 

%%%%%%%%%%%%%%%%%%%%%%%%%%%%%%
%%%%%%%%%%%%%%%%%%%%%%%%%%%%%%
%%%%%%%%%%%%%%%%%%%%%%%%%%%%%%
\section{Evaluation of history part}\label{sec:hist}
The history part corresponds to the smooth, long-range contribution of the layer and volume potentials obtained from the split at $\delta$. Since the corresponding kernel is non-singular and rapidly decaying in Fourier space, it can be evaluated efficiently using the non-uniform fast Fourier transform (NUFFT).

\subsection{Fourier representation}
Taking the Fourier transform of the kernel \eqref{H_ker} with respect to the spatial variable gives
\begin{align}
    \widehat{H}(t,\mathbf{k}) = e^{-(\alpha^2+\norm{\mathbf{k}}^2)t}.
\end{align}
Using the inverse transform, we can write
\begin{align}
    H(t,\bfx-\bfx') = \frac{1}{(2\pi)^2}\int_{\mathbf{k}\in\mathbb{R}^2} e^{-(\alpha^2+\norm{\mathbf{k}}^2)t}e^{i\mathbf{k}\cdot (\bfx-\bfx')}\,\mathrm{d}\mathbf{k}.
\end{align}
Substituting this expression into the history part of the single layer potential in \eqref{sl_sh}, interchanging the order of integration, and integrating in time over $[\delta,\infty)$ gives
\begin{align}\label{sh}
    \slp_{H}[\sigma](\bfx) 
    &= \frac{1}{(2\pi)^2}\int_{\mathbf{k}\in\mathbb{R}^2}\frac{e^{-\delta(\alpha^2+\lVert \mathbf{k}\rVert^2)}}{\alpha^2+\lVert \mathbf{k}\rVert^2}\hat{\sigma}(\mathbf{k}) e^{i\mathbf{k}\cdot \bfx}\,\mathrm{d} \mathbf{k},
\end{align}
where
\begin{align}
\label{sigma_hat_def}
    \hat{\sigma}(\mathbf{k}) = \int_{\partial \Omega} \sigma(\bfx')e^{-i\mathbf{k}\cdot\bfx'}\,\mathrm{d} s_{\bfx'}
\end{align}
is the Fourier transform of the layer density. 

In practice, the Fourier integral \eqref{sh} is truncated to a finite square domain $[-k_{\max},k_{\max}]^2$. The cutoff $k_{\max}$ is chosen such that the exponential factor in the history multiplier is below a prescribed tolerance $\varepsilon>0$, namely $e^{-\delta(\alpha^2+k_\text{max}^2)}\leq\varepsilon$. Solving for $k_{\max}$ gives
\begin{align}\label{kmax}
    k_{\max} = \sqrt{\max\left(0,\frac{\ln(1/\varepsilon)-\delta\alpha^2}{\delta}\right)}.
\end{align}
If $\delta\alpha^2\geq\ln(1/\varepsilon)$, then the history contribution is already below the prescribed tolerance and we set $k_{\max} = 0$. 

Using the truncated history part yields
\begin{align}\label{slp_H}
    \slp_H [\sigma](\bfx) = \frac{1}{(2\pi)^2}\int_{[-k_\text{max},k_{\max}]^2}\frac{e^{-\delta(\alpha^2+\lVert \mathbf{k}\rVert^2)}}{\alpha^2+\lVert \mathbf{k}\rVert^2} e^{i\mathbf{k}\cdot \bfx}\hat{\sigma}(\mathbf{k})\,\mathrm{d} \mathbf{k} + \mathcal{O}(\varepsilon).
\end{align}
The history part of the volume potential has the same Fourier multiplier. The only difference is that the Fourier transform is taken over the volume source term, i.e.
\begin{align}
    \mathcal{V}_H[f](\mathbf{x})
    = \frac{1}{(2\pi)^2}
      \int_{[-k_\text{max},k_{\max}]^2}
      \frac{e^{-\delta(\alpha^2+\lVert\mathbf{k}\rVert^2)}}
           {\alpha^2+\lVert\mathbf{k}\rVert^2}
      e^{i\mathbf{k}\cdot\mathbf{x}}\,\hat{f}(\mathbf{k})\,\mathrm{d}\mathbf{k} + \mathcal{O}(\varepsilon),
\end{align}
where
\begin{align}\label{rhs_vol}
    \hat{f}(\mathbf{k}) =
    \int_{\Omega} f(\mathbf{x}') e^{-i\mathbf{k}\cdot\mathbf{x}'} \,\mathrm{d}\mathbf{x}'.
\end{align}
For the double layer potential, the normal derivative of the kernel introduces one additional factor of $\mathbf{k}$ in Fourier space. Therefore the cutoff is chosen slightly more conservative, so that 
\begin{align}\label{kmax_dlp}
    k_{\max}e^{-\delta(\alpha^2+k_{\max}^2)}\leq \varepsilon.
\end{align}
This gives
\begin{align}
    \mathcal{D}_H[\mu](\mathbf{x})
    = \frac{1}{(2\pi)^2}
      \int_{[-k_{\max},k_{\max}]^2}
      \frac{e^{-\delta(\alpha^2+\lVert\mathbf{k}\rVert^2)}}
           {\alpha^2+\lVert\mathbf{k}\rVert^2}
      e^{i\mathbf{k}\cdot\mathbf{x}}\,
      i\mathbf{k}\cdot\boldsymbol{\hat{\mu}}(\mathbf{k})\,\mathrm{d}\mathbf{k} + \mathcal{O}(\varepsilon),
\end{align}
where
\begin{align}
    \boldsymbol{\hat{\mu}}(\mathbf{k}) =
    \int_{\partial\Omega}\boldsymbol{\nu}(\mathbf{x}')\mu(\mathbf{x}')
    e^{-i\mathbf{k}\cdot\mathbf{x}'}\,\mathrm{d}s_{\mathbf{x}'}.
\end{align}

\subsection{Evaluation using NUFFT}
To evaluate the history contribution, we use the boundary and volume quadrature rules introduced in Sections \ref{sec:tri_cut_mesh} and \ref{sec:bound_disc}. The boundary quadrature rule \eqref{eq:bdry_quad} is used to compute discrete Fourier transforms of the layer densities, while the volume quadrature rule \eqref{vol_quad} is used to compute the discrete Fourier transform of the source function $f$.

\begin{comment}
To compute the discrete Fourier transforms of the layer densities, we need a quadrature rule over the boundary $\partial\Omega$, and to compute the discrete Fourier transform of the source function $f$, we need a quadrature rule over the entire domain $\Omega$. 
\textcolor{red}{Rewrite to use that discretization section has been moved up and can be refered to.}
Suppose therefore that we are given a set of $N_B$ boundary nodes $\{\bfx_j\}_{j=1}^{N_B}$ with corresponding quadrature weights $\{w_j\}_{j=1}^{N_B}$ that discretize the boundary $\partial\Omega$, and a set of $N$ nodes $\{\bfx'_j\}_{j=1}^{N}$ with corresponding quadrature weights $\{w'_j\}_{j=1}^{N}$, that discretize the computational domain $\Omega$. The role of the volumetric mesh is only to provide quadrature nodes and weights to approximate the integral \eqref{rhs_vol}. Therefore, poorly shaped or degenerate elements do not lead to stability issues. 
\end{comment}

We choose the Fourier truncation parameter $k_{\max}$ according to \eqref{kmax} and approximate the Fourier integrals over the square $[-k_{\max},k_{\max})^2$. The truncated Fourier domain is discretized by a uniform grid with $n_f$ points in each coordinate direction and spacing
\begin{align}\label{delta_k}
    \Delta k = \frac{2k_{\max}}{n_f}.
\end{align}
We denote the resulting Fourier nodes by $\{\mathbf{k}_l\}_{l=1}^{N_f}$, where $N_f = n_f^2$. 

We first consider the single layer potential. Discretizing the integral using \eqref{eq:bdry_quad}, 
the discrete Fourier transform of the layer density becomes
\begin{align}
\label{disrete_sigma_hat}
    \hat{\sigma}_l = \sum_{j=1}^{N_B} \sigma(\bfx_j)e^{-i\mathbf{k}_l\cdot\bfx_j}w_j.
\end{align}
The sum is efficiently evaluated using a type-1 NUFFT, which allows for non-uniform boundary nodes. The history part of the single layer potential is then approximated by 
\begin{align}\label{sh_fourier}
    \slp_H[\sigma](\bfx) \approx \frac{(\Delta k)^2}{(2\pi)^2}\sum_{l=1}^{N_f} \frac{e^{-\delta(\alpha^2+\norm{\mathbf{k}_l}^2)}}{\alpha^2+\norm{\mathbf{k}_l}^2}\hat{\sigma}_l \, e^{i\mathbf{k}_l\cdot\bfx}.
\end{align}
The inverse transform is evaluated at arbitrary target points using a type-2 NUFFT.

The double layer history contribution is evaluated analogously to the single layer history contribution. The normal derivative with respect to the source variable introduces an additional factor involving $\mathbf{k}$, and the Fourier truncation parameter $k_{\max}$ is chosen according to \eqref{kmax_dlp}, from which $\Delta k$ is chosen according to \eqref{delta_k}. The Fourier transform is then taken over the vector valued boundary density $\boldsymbol{\nu}(\bfx')\mu(\bfx')$. For the volume potential, the same Fourier grid and history multiplier as for the single layer potential are used, but the Fourier transform is taken over the whole domain $\Omega$, using the volume quadrature rule \eqref{vol_quad}, rather than over the boundary $\partial\Omega$. 

We need to determine what size of $\Delta k$ that is needed to resolve the inverse Fourier transform.  Suppose our domain $\Omega \subset [-L,L]^2$. This means that the largest possible separation between two points in $\Omega$ is $2L$. From the Nyquist condition we require of the Fourier grid that
\begin{align}
    \Delta k \leq \frac{\pi}{L}.
\end{align}
We are sampling $k$ on the interval
\begin{align}
    [-k_{\max},k_{\max}]
\end{align}
which has length $2k_{\max}$. Therefore, the number of grid points in one dimension must satisfy
\begin{align}\label{nf}
    n_f\geq \frac{2Lk_{\max}}{\pi} = \frac{2L}{\pi}\sqrt{\frac{\ln(1/\varepsilon)}{\delta}-\alpha^2}.
\end{align}
In addition, we need to have a fine enough spatial discretization to accurately evaluate the Fourier coefficients, i.e. to approximate the Fourier integral in the evaluation of $\hat{\sigma}_l$ in \eqref{disrete_sigma_hat} and correspondingly $\hat{f}_l$ for the source function.  
Consider the volumetric quadrature, where $\Delta x$ denotes the mesh size of the volumetric mesh. If we assume that we are uniformly sampling the function at points with a distance $\Delta x$, then the largest resolvable wavenumber given by the Nyquist frequency is
\begin{align}\label{k_nyq}
    k_{\text{Nyq}} = \frac{\pi}{\Delta x}.
\end{align}
Now, this is not exactly what we do. We do not have a uniform discretization, and as we have a third order quadrature rule, we are sampling the function at discrete points with spacing smaller than $\Delta x$. We do however find this measure useful, as we see that this is in practice close to the highest wave number that our meshes can actually resolve. 
\begin{comment}
    Consider the volumetric quadrature, where $\Delta x$ denotes the mesh size of the volumetric mesh, then the largest resolvable wavenumber is given by the Nyquist frequency
\begin{align}\label{k_nyqXXX}
    k_{\text{Nyq}} = \frac{\pi}{\Delta x}.
\end{align}
This condition comes from assuming that we are uniformly sampling the function at points with a distance $\Delta x$. This is not exactly what we do - 
\end{comment}

The computational complexity of evaluating the history parts for all three potentials is of the order $\mathcal{O}(N_f\log(N_f)+N_B+N)$ \cite{dutt1993fast}, where $N_f=n_f^2$.

%In addition, the volumetric quadrature must be fine enough to resolve the Fourier coefficients of the source function up to the truncation frequency. If $\Delta x$ denotes the mesh size of the volumetric mesh, then the largest resolvable wavenumber is given by the Nyquist frequency

\vspace{5mm}

\noindent \textbf{Influence of the parameter $\boldsymbol{\alpha}$.} As $\alpha$ increases, the kernel in \eqref{ker_greens} becomes increasingly localized and sharply peaked. This shifts the computational burden from the history part to the local part. As shown in \eqref{nf}, the number of required Fourier modes decreases as $\alpha$ grows; specifically, if $\alpha^2 \delta \geq \ln(1/\epsilon)$, the contribution from the history part becomes negligible. In this regime, we can set $k_{\max}=0$, effectively reducing the algorithm to the evaluation of the local part only.

Conversely, as $\alpha\to 0$, we have a singularity at the origin in Fourier space, making it difficult to evaluate the expression \eqref{sh_fourier}. This can be resolved, however, using the method of \cite{vico2016fast}. The limiting case $\alpha = 0$ corresponds to the Poisson equation and is treated in \cite{fryklund2026lightweight}. In this paper, we focus on the modified Helmholtz equation away from this limiting case. Empirically, we find that the number of Fourier modes \eqref{nf} is robust for $\alpha>2$, and all our numerical tests below use $\alpha\geq 10$. For use with elliptic marching, this is not a restrictive assumption as $\alpha^2$ is inversely proportional to the time step. 

%%%%%%%%%%%%%%%%%%%%%%%%%%%%%%
%%%%%%%%%%%%%%%%%%%%%%%%%%%%%%
%%%%%%%%%%%%%%%%%%%%%%%%%%%%%%
\section{Numerical results}\label{sec:num}
The algorithm described above is implemented using a combination of Fortran 77 and MATLAB. The computationally intensive parts, including the evaluation of the history part and the contributions at the dyadic refinement levels, as well as the identification of the closest boundary point and the interpolation of data to that point (needed in the asymptotic expansions), are written in Fortran, while the evaluation of the local part, the solver and the overall driver are handled in MATLAB. For the NUFFT calls in the history part, we use the FINUFFT library \cite{barnett2019parallel, finufft2018}.    

Except for the Hilbert tube example in Section \ref{sec:hilbert}, the computational domains are defined by the boundary curve
\begin{align}\label{bound_curve}
\gamma(t)=\left(r_0+ a\cos(\ell_1 t + \beta_1)+ b\sin(\ell_2 t + \beta_2)\right)e^{i\left(t + 0.1\sin(\ell_3 t)\right)},
\qquad t\in[0,2\pi],
\end{align}
where $r_0$, $a$, $b$, $\ell_1$, $\ell_2$, $\ell_3$, $\beta_1$, and $\beta_2$ are shape parameters. For each such domain, we generate a volumetric mesh using the method described in Section \ref{sec:tri_cut_mesh}. Volume integrals over the resulting triangles are approximated using six Vioreanu-Rokhlin quadrature nodes per triangle, which is sufficient for third-order accuracy. In all experiments, we set the tolerance for the NUFFT and the tolerance $\varepsilon$ for the history part to $10^{-6}$. 

In our first experiment, we validate the asymptotic expansions of the volume and layer potentials derived in Section \ref{sec:local}. The second example is a Neumann problem on a domain described by \eqref{bound_curve} with randomly chosen shape parameters, and the third example is a Dirichlet problem on a set of domains. This example demonstrates robustness with respect to mesh quality. Finally, the last example is a Dirichlet problem on the complicated Hilbert tube geometry.

\subsection{Validation of asymptotic expansions}

To validate the accuracy of the local asymptotic expansions for the layer potentials derived in Section \ref{sec:local}, we consider the homogeneous interior modified Helmholtz equation with boundary data generated from an exact solution. The computational domain is defined by \eqref{bound_curve} with parameters $r_0 = 1$, $a = 0.3$, $\ell_1 = 5$ and $b=\ell_3 = \beta_1 = 0$. 
%%This domain is not shown in a figure. 

We use a manufactured solution generated by a sum of Green's functions centered outside of the domain. 
Let $\bfx_j$, $j=1,\ldots,M$, denote locations outside of $\Omega$, and  define the exact solution as
\begin{align}\label{sol_hom}
    u(\bfx) = \sum_{j=1}^M c_jK_0(\alpha\lVert\bfx-\bfx_j\rVert), \quad \bfx\in\Omega, 
\end{align}
where $c_j$ are given coefficients. 
Since all Green's functions are centered outside of the domain, $u(\bfx)$ is smooth and satisfies the homogeneous modified Helmholtz equation in $\Omega$.

We first prescribe Dirichlet boundary data $g(\bfx)=u(\bfx)$, using \eqref{sol_hom} for $\bfx\in\partial\Omega$, 
%\begin{align}
%     g(\bfx) = \sum_{j=1}^M c_j  K_0(\alpha\lVert \bfx-\bfx_j\rVert), \quad %\bfx\in\partial\Omega.
%\end{align}
and compute the layer density $\mu$ by solving \eqref{bie_dir}, as discretized in 
\eqref{dirichlet-discrete}. 
Since $f=0$, the volume potential vanishes, and this problem can be solved using only the smooth quadrature. In the post-processing step, we use the kernel split to evaluate the double layer potential, using a discretization that makes sure that the local asymptotic expansion dominates the error (see discussion below). We evaluate the numerical solution at 10000 uniformly distributed target points inside the computational domain and measure the error compared to the exact solution $u(\bfx)$ in \eqref{sol_hom}.

\begin{comment}
To validate the asymptotic expansions for the double layer potential, we prescribe the Dirichlet boundary data
\begin{align}
     g(\bfx) = \sum_{j=1}^M c_j  K_0(\alpha\lVert \bfx-\bfx_j\rVert), \quad \bfx\in\partial\Omega.
\end{align}
The solution is then computed from the Dirichlet formulation \eqref{bie_dir}. Since $f=0$, the volume potential vanishes, and only the double layer potential needs to be evaluated. 
\end{comment}
To validate the asymptotic expansion for the single layer potential, we use Green's representation formula \eqref{sol_neumann}. Let
\begin{align}
\label{neu_data_hom}
    g(\bfx) = \frac{\partial u(\bfx)}{\partial \boldsymbol{\nu}} = \sum_{j=1}^M c_j \frac{\partial}{\partial \boldsymbol{\nu}} K_0(\alpha\lVert \bfx-\bfx_j\rVert), \quad \bfx\in\partial\Omega.
\end{align}
be the Neumann boundary data. Since $f=0$, Green's representation formula reduces to a sum of a single layer and a double layer potential. Hence the single layer potential satisfies,
\begin{align}
    \slp[g](\bfx) = u(\bfx)+\dlp[u|_{\partial\Omega}](\bfx),
\end{align}
where $u|_{\partial\Omega}$ is the boundary trace of $u$. We therefore construct reference values for the single layer potential $\slp_{\text{ref}}[g](\bfx)$, for all $\bfx$ points in the uniform grid interior to the domain, by evaluating the double layer potential with density $u|_{\partial\Omega}$ to high accuracy. For each value of $\delta$ in the convergence study, we then evaluate $\slp[g](\bfx)$ over the same set of $\bfx$ points and measure the error relative to the reference values. 

In our numerical tests, we use $M=10$ sources located at a distance of 0.2 outside the domain, and use uniform strengths $c_j = 10$. To ensure that the observed errors are dominated by the local asymptotic expansions, we use a highly resolved boundary discretization with 1000 panels and 16 Gauss-Legendre nodes per panel, and set all solver tolerances to $10^{-14}$, which ensures also that the eror in the evaluation of the long-range history part is negligible.  Numerical results for $\alpha=10$ are presented in Figures \ref{fig:local_conv_slp} and \ref{fig:local_conv_dlp}. We observe that the relative $l_\infty$ and $l_2$ errors follow the predicted theoretical convergence rates. 

To validate the asymptotic expansions for the volume potential $\mathcal{V}[f]$, we solve the inhomogeneous interior Dirichlet problem with source function 
\begin{align}
    f(\bfx)=\cos(2\pi x_1)\sin(2\pi x_2).
\end{align}
The Dirichlet problem is first solved using \eqref{bie_dir} to obtain the double layer density $\mu$. A high-accuracy reference solution $u_{\text{ref}}(\bfx)$ for the full boundary value problem is then computed using the method in \cite{fryklund2022adaptive}. Since the Dirichlet representation has the form \eqref{sol_dirichlet}, we isolate the volume potential contribution by subtracting an accurately evaluated double layer potential from the reference solution. More precisely, we define
\begin{align}
    \vp_{\text{ref}}[f](\bfx) = u_{\text{ref}}(\bfx)-\dlp_{\text{ref}}[\mu](\bfx),
\end{align}
where $\dlp_{\text{ref}}[\mu](\bfx)$ is evaluated with the present method tuned to high accuracy. The volume potential is then evaluated on a fixed, highly resolved volumetric mesh while varying the splitting parameter $\delta$. Errors are measured at the quadrature nodes of the volumetric mesh, by comparing with the reference volume potential. The use of a fixed fine mesh ensures that the observed error is dominated by the local asymptotic expansion rather than by the history part. As shown in Figure \ref{fig:local_conv_vp}, $\mathcal{V}_L[f]$ exhibits the expected order of convergence.
\begin{figure}
\centering
    \begin{subfigure}[b]{0.31\textwidth}
    \centering
    \includegraphics[width=5.6cm]{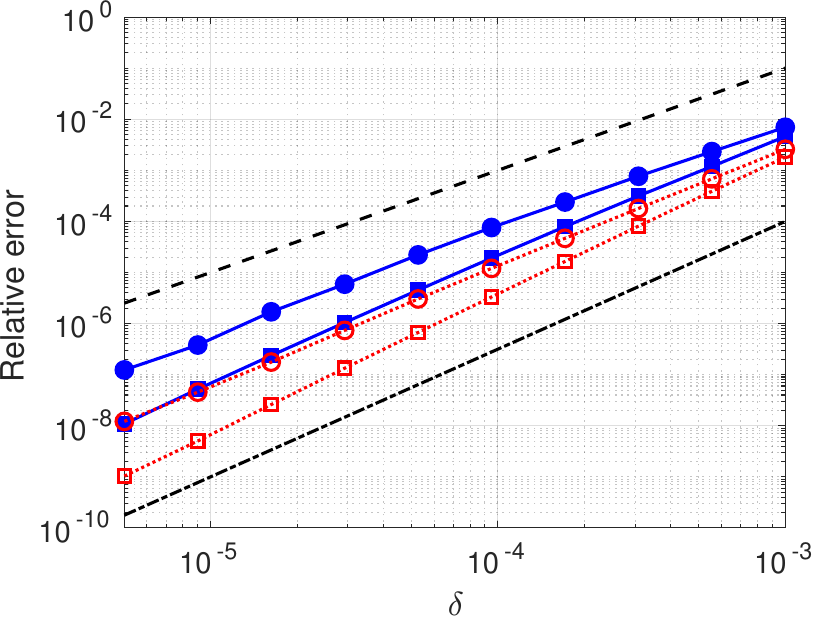}
    \caption{\label{fig:local_conv_slp}}
    \end{subfigure}
\quad
    \begin{subfigure}[b]{0.31\textwidth}
    \centering
    \includegraphics[width=5.4cm]{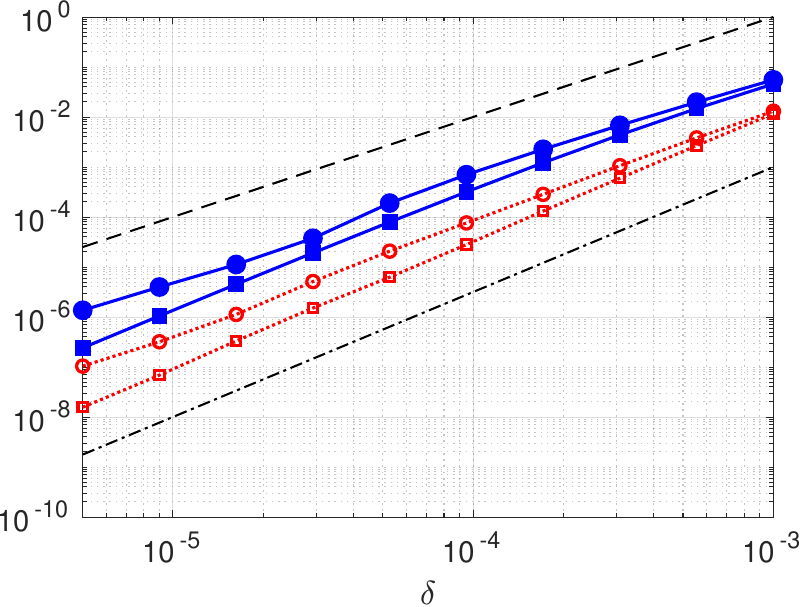}
    \caption{\label{fig:local_conv_dlp}}
    \end{subfigure}
\quad
    \begin{subfigure}[b]{0.31\textwidth}
    \centering
    \includegraphics[width=5.4cm]{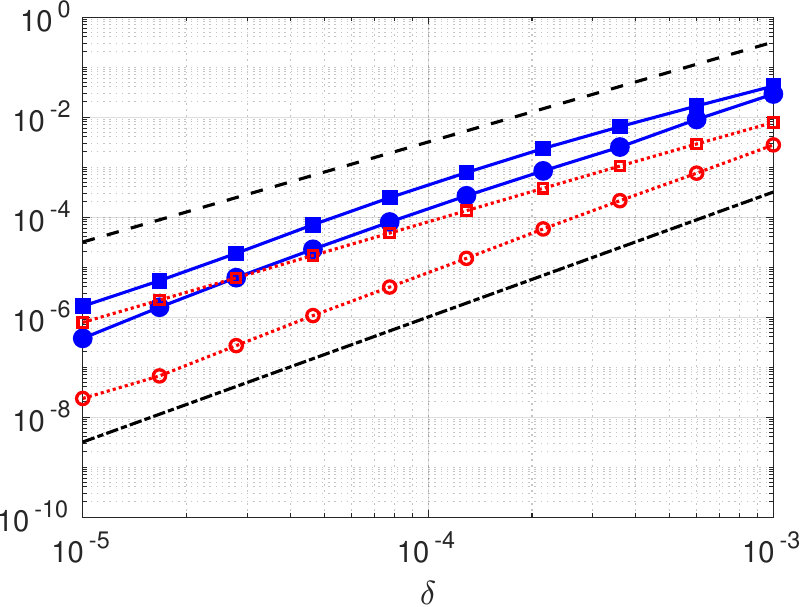}
    \caption{\label{fig:local_conv_vp}}
    \end{subfigure}
\caption{\label{local_conv} Relative errors as a function of the splitting parameter $\delta$ for the local part of the single layer potential $\slp[\sigma]$ (a), double layer potential $\dlp[\mu]$ (b), and volume potential $\vp[f]$ (c). Solid lines with filled markers denote relative $\ell_\infty$ errors, while dotted
lines with open markers denote relative $\ell_2$ errors. Square markers correspond to asymptotic expansions of order $\mathcal{O}(\delta^{2})$, while round
markers correspond to expansions of order $\mathcal{O}(\delta^{5/2})$. The dashed and dashed-dotted lines are reference slopes for  $\delta^{2}$ and $\delta^{5/2}$ convergence rates, respectively.}
\end{figure}

\vspace{0.5em}
\noindent\textbf{Influence of the Parameter $\boldsymbol{\alpha}$.} Figure \ref{slp_conv_alphas} illustrates the dependence of the local error for the single layer potential to the parameter $\alpha$. The increase in error with $\alpha$ is not explained by the dependence on $c_2=\alpha\sqrt{\delta_*}$ in the leading order $O(\delta_*^{3/2})$ error in the local asymptotic formula \eqref{slp_local1}. The terms that contain $c_2$ are $O(1)$, and the geometry is the same for all $\alpha$. 
Hence, the larger errors must come from the magnitude of the layer density $\sigma$, and its first and second derivatives, evaluated at the closest boundary point. 
This is clearly the case in this example. 
Here, the layer density is the Neumann data in \eqref{neu_data_hom}, shown in Figure \ref{fig:densities}. The oscillations are due to the $M=10$ terms in the sum, each one the normal derivative of a shifted modified Helmholtz kernel $K_0(\alpha r)$, that becomes more localized and more sharply peaked with increasing $\alpha$.
\begin{comment}
 Rather, as $\alpha$ increases, the modified Helmholtz kernel $K_0(\alpha r)$ becomes more localized and more sharply peaked. The corresponding layer density $\sigma$ is also $\alpha$-dependent, since it solves a boundary integral equation whose kernel depends on $\alpha$. As shown in Figure \ref{fig:densities}, larger values of $\alpha$ produce more sharply peaked derivatives. Since the neglected terms in the local expansion involve derivatives of the densities, these sharper densities lead to larger error constants, and hence to larger observed local errors.
   
\end{comment}
\begin{figure}[htbp]
    \begin{subfigure}{0.45\textwidth}
            \centering
    \includegraphics[width=7.2cm]{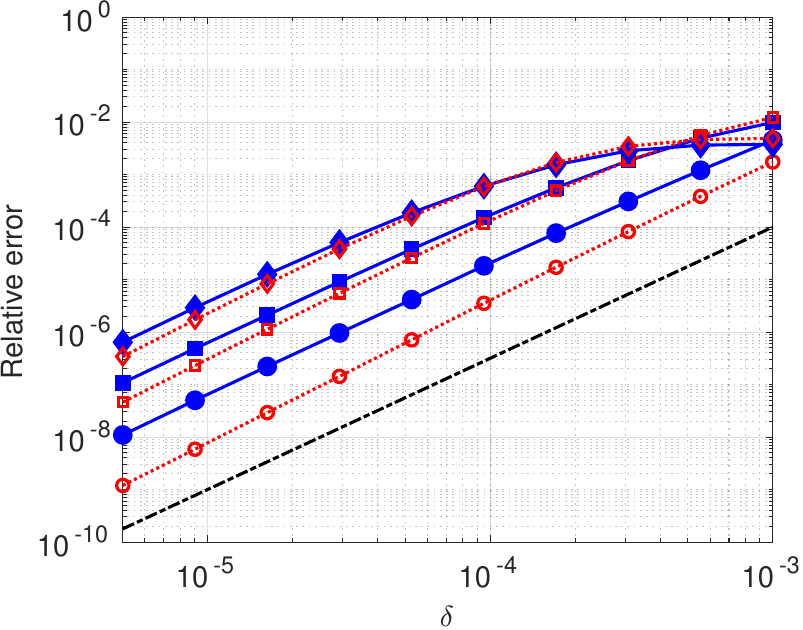}
    \vspace{1mm}
    \caption{\label{slp_conv_alphas} }
    \end{subfigure}
    \quad
    \begin{subfigure}{0.45\textwidth}
        \centering
        \includegraphics[width=7.2cm]{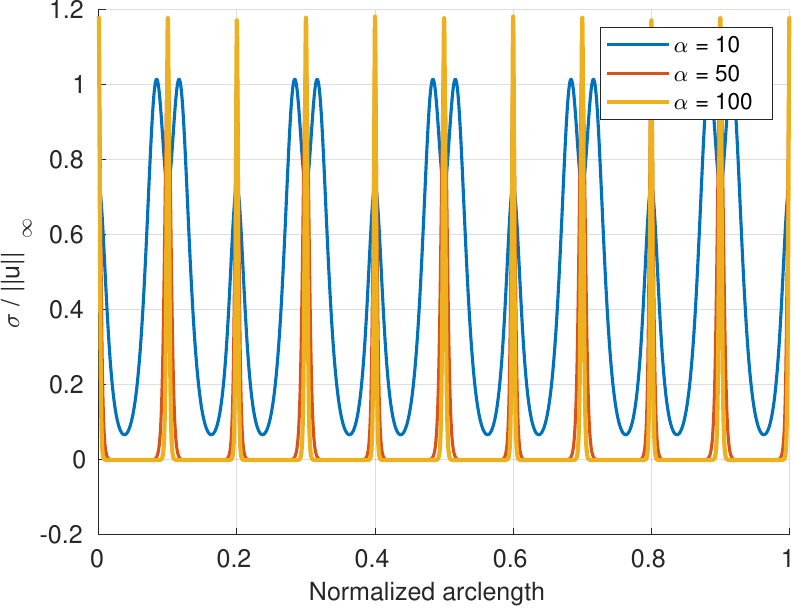}
        \caption{\label{fig:densities}}
    \end{subfigure}
\caption{(a) Relative $\ell_2$ (dotted) and $\ell_\infty$ (solid) errors as a function of the splitting parameter $\delta$ for the local part of the single layer potential $\slp_L[\sigma]$. Results are shown for $\alpha = 10$ (circles), $\alpha = 50$ (squares) and $\alpha = 100$ (diamonds). The dashed reference line corresponds to $\mathcal{O}(\delta^{5/2})$ convergence. (b) Corresponding single layer densities $\sigma$ on the boundary. As $\alpha$ increases, the modified Helmholtz kernel becomes more localized, producing more sharply peaked boundary densities. These larger density derivatives increase the constants in the local asymptotic error, explaining the higher error levels observed in (a).  }
\end{figure}

\subsection{Neumann problem on random smooth domain}\label{sec:int_neu}

The computational domain $\Omega$ is defined by the boundary curve \eqref{bound_curve} where the parameters $r_0 \approx 1.066 $, $a \approx 0.25$, $b$, $\ell_1 = 3$, $\ell_2 = 7$, $\ell_3 = 3$, $\beta_1\approx 3.89$, and $\beta_2\approx 1.61$ were chosen randomly. A triangular mesh for the domain with $\Delta x \approx 0.1$ is shown in Figure \ref{fig:neu_dom_mesh}.

\begin{figure}[h!]
\centering
    \begin{subfigure}[b]{0.45\textwidth}
    \centering
    \includegraphics[width=7.2cm]{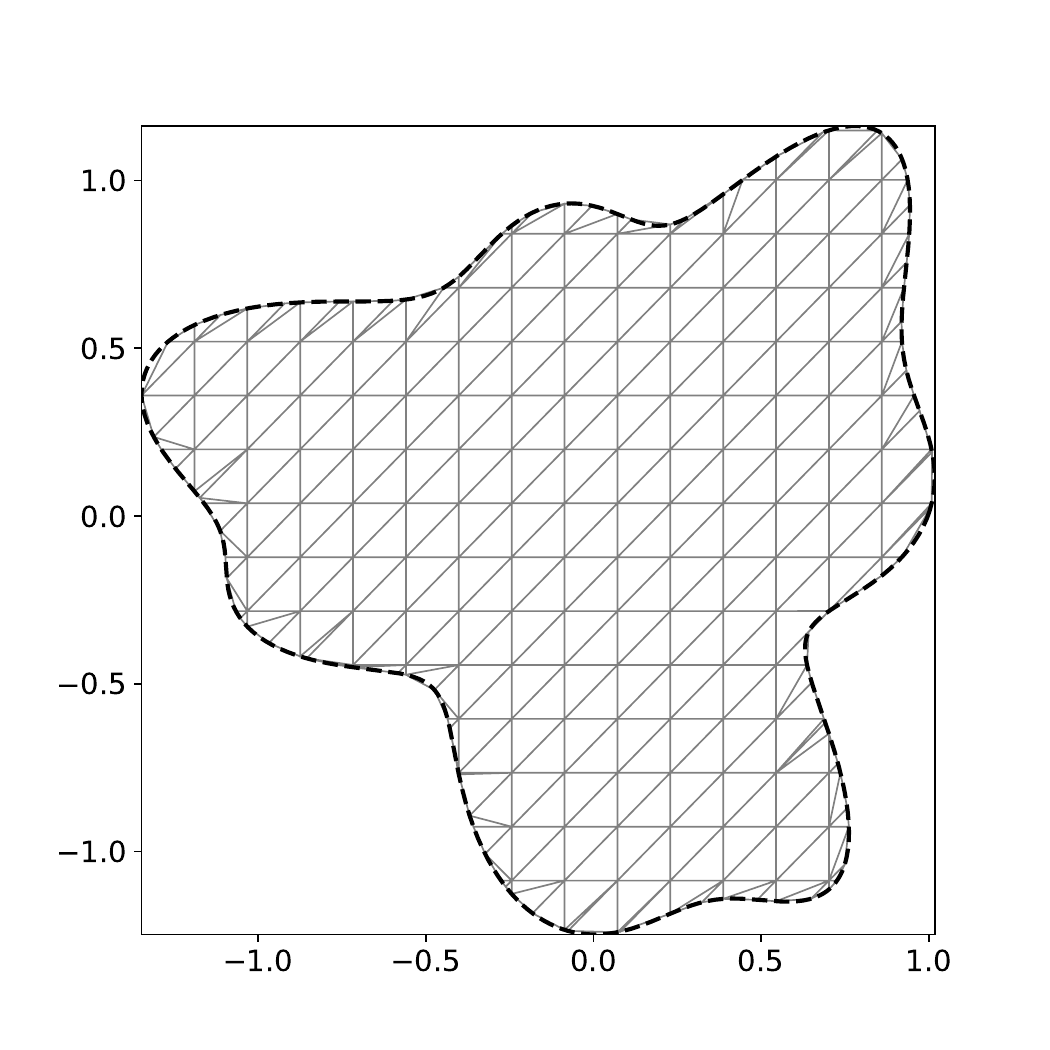}
    \caption{\label{fig:neu_dom_mesh}}
    \end{subfigure}
\quad
    \begin{subfigure}[b]{0.45\textwidth}
    \centering
    \vspace{1.5em}
    \includegraphics[width=7.5cm]{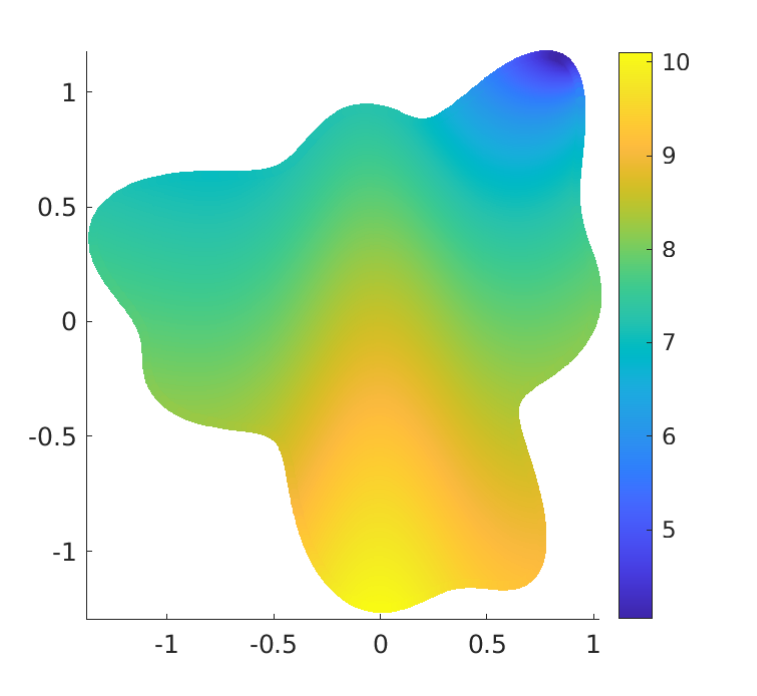}
    \caption{\label{fig:usol_int_neu}}
    \end{subfigure}
\caption{\label{dom_mesh} (a) Background triangular mesh cut to the domain $\Omega$, where polygonal boundary cells have been subdivided into triangles. Isoparametric mappings are used to conform with the curved boundary.
%retriangulated to conform to the boundary. 
The mesh size $\Delta x \approx 0.1$ corresponds to the side length of the interior regular triangles. (b) Manufactured solution \eqref{u_sol_int_neu} evaluated on the domain $\Omega$.}
\end{figure}

We consider the exact solution
\begin{align}\label{u_sol_int_neu}
    u(\bfx) = \frac{(x_1-p_1)}{(x_1-p_1)^2+(x_2-p_2)^2}+\frac{x_1^2}{9}-x_2+8+e^{-5x_1^2}, \quad \bfx\in\Omega
\end{align}
where $p_1=1.1$ and $p_2=1.3$. We analytically compute the source function $f(\bfx)$ and the Neumann boundary data $g(\bfx)$, so that \eqref{mod_helm} and \eqref{Neumann} are satisfied. Figure \ref{fig:usol_int_neu} shows the solution $u(\bfx)$ on the domain $\Omega$.

We solve the interior Neumann problem on a sequence of seven meshes with $\Delta x \in [1.6\times10^{-3}, \,2.4\times 10^{-2}]$. 
\begin{comment}
    The splitting parameter $\delta$ is chosen such that $\delta = 3\Delta x^2$, and the local expansions include terms up to $\mathcal{O}(\delta^{3/2})$, which means we get third order accuracy with respect to $\Delta x$ in the local part. To decrease the error in the local part, we use the dyadic refinement strategy as described in subsection \ref{sec:dyad}, with $J=3$ levels of refinement. 
\end{comment}
The splitting parameter $\delta$ is chosen such that $\delta = 3\Delta x^2$. We use the dyadic refinement strategy as described in subsection \ref{sec:dyad}, with $J=3$ levels of refinement, which means that $\delta_*$ is a factor of $4^3$ smaller than 
$\delta$, but still proportional to $\Delta x^2$. As the error in the local expansions are $\mathcal{O}(\delta_*^{3/2})$, we expect third order accuracy with respect to $\Delta x$.
The results for $\alpha = 10, 50$ and 100 are shown in Figure \ref{fig:int_neu1}, where we observe the expected convergence rate, until the tolerance for the history part is reached. 

Figure \ref{fig:int_neu2} shows the total time required for the post-processing step, in which the solution is evaluated at all nodes of the volumetric mesh. This step includes identifying the evaluation points that are close to the boundary, finding their closest boundary points, interpolating the geometric and density data to these points, and evaluating the local, history and dyadic contributions. For the four largest problem sizes shown in Figure \ref{fig:int_neu2}, the number of points processed per second per core is $1.1\times 10^6$, $1.2\times 10^6$, $1.3\times 10^6$ and $1.4\times 10^6$, respectively. 

\begin{figure}[h!]
\centering
    \begin{subfigure}[b]{0.45\textwidth}
    \centering
    \includegraphics[width=7.2cm]{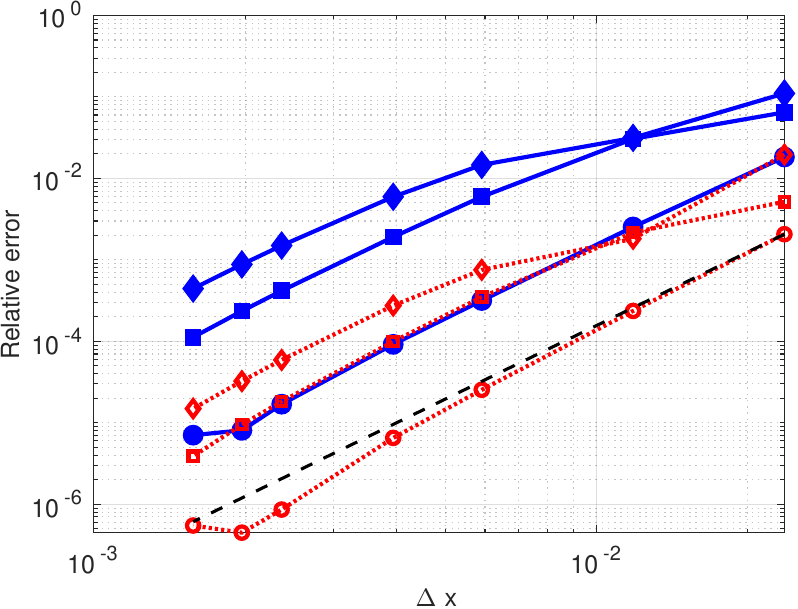}
    \caption{\label{fig:int_neu1}}
    \end{subfigure}
\quad
    \begin{subfigure}[b]{0.45\textwidth}
    \centering
    \includegraphics[width=7.2cm]{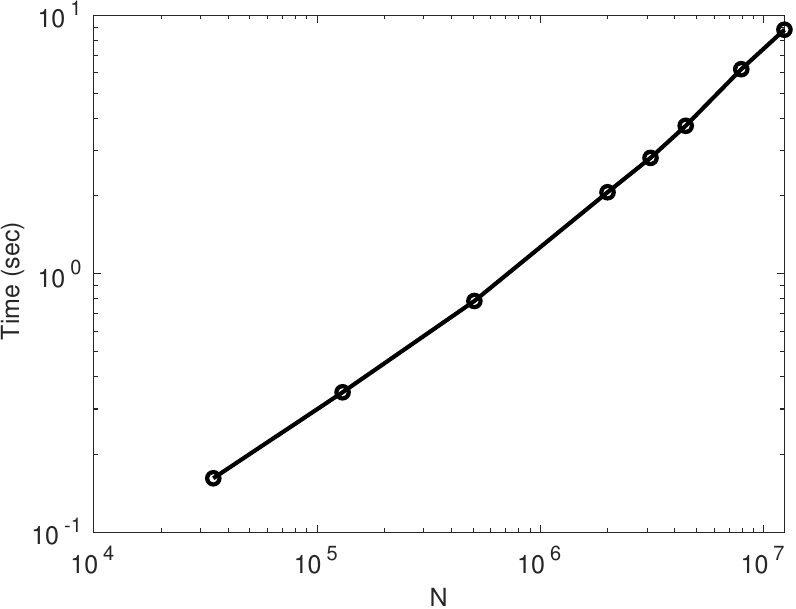}
    \caption{\label{fig:int_neu2}}
    \end{subfigure}
\caption{\label{int_neu} (a) Relative $\ell_2$ (dotted) and $\ell_\infty$ (solid) errors as functions of the mesh size $\Delta x$ for $\alpha = 10$ (circles), $\alpha = 50$ (squares) and $\alpha = 100$ (diamonds). The dashed line is a reference line corresponding to $\mathcal{O}(\Delta x^3)$. (b) Total post-processing time for evaluating the solution at all volumetric mesh nodes. This includes the evaluation of the single layer, double layer and volume potential contributions, including their local, history and dyadic parts, as well as the closest-point search and interpolation needed for near-boundary targets.}
\end{figure}

\subsection{Dirichlet problem with random shifts and rotations}\label{sec:dir_shift}

To assess the robustness of the method with respect to mesh quality, we consider the interior Dirichlet problem on a domain that is shifted and rotated multiple times. For each new shift, the domain boundary will intersect the underlying regular grid in a different way, possibly generating degenerate or sliver triangles.  
We investigate how rotation and translation of the domain affect the accuracy of the method, and thereby its sensitivity to highly distorted mesh elements.

The reference domain is defined by the boundary curve \eqref{bound_curve} with parameters $r_0 = 0.8$, $a=-0.05$, $b=0.08$,  $\ell_1=5$, $\ell_2 = 7$, $\ell_3 = 5$, $\beta_1=2$, and $\beta_2=5$. To generate a family of domains, we apply transformations of the form
\begin{align}\label{gamma_tilde}
    \widetilde{\gamma}(t) = \gamma(t)e^{i\phi} + \omega,
\end{align}
where $\phi \in [0,2\pi)$ is a rotation angle and $\omega \in \mathbb{C}$ is a translation. This means that the shape of the domain is preserved while its location in the plane changes. In Figure \ref{fig:total_layout} is plotted the domain for $\phi = 0.88$ and $\omega \approx 0.063+0.43i$, together with parts of the mesh.

\begin{figure}[htbp]
     \centering
     % --- LEFT COLUMN (Big Image) ---
     \begin{subfigure}[b]{0.48\textwidth}
         \centering
         \includegraphics[width=\textwidth]{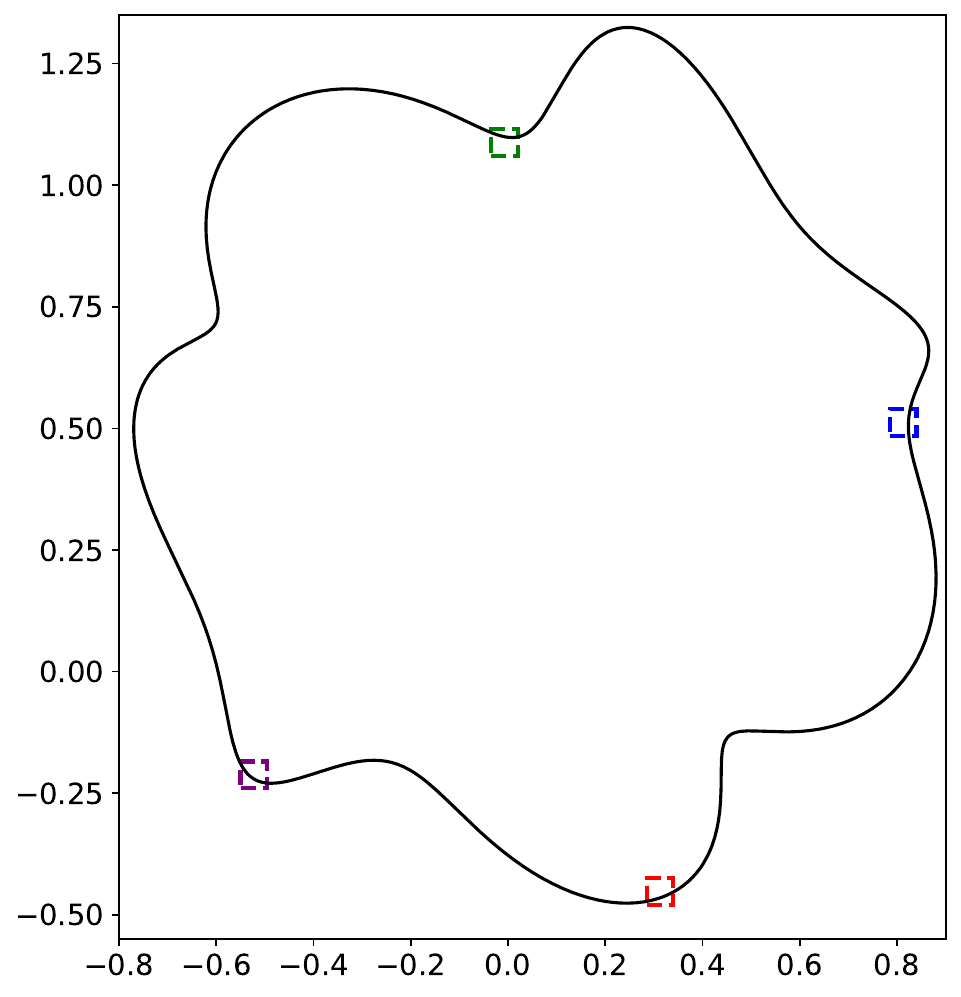}
         \label{fig:big}
     \end{subfigure}
     \hfill
     % --- RIGHT COLUMN (2x2 Grid) ---
     \begin{subfigure}[b]{0.48\textwidth}
         \centering
         % Top Row of Right Column
         \begin{subfigure}[b]{0.48\textwidth}
             \centering
             \includegraphics[width=\textwidth]{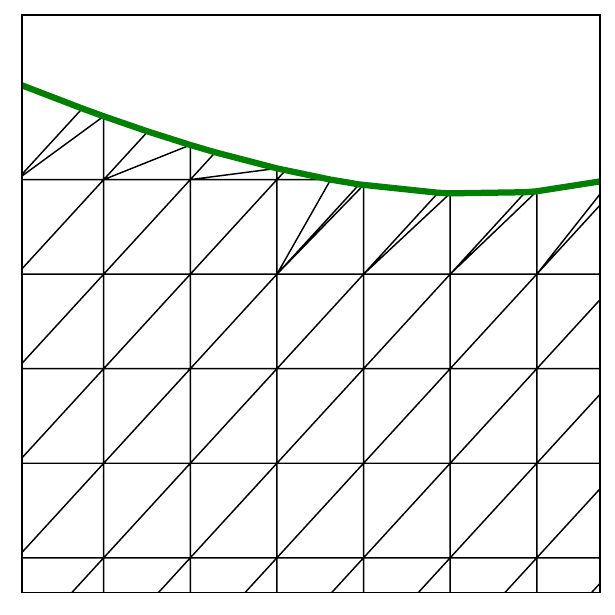}
         \end{subfigure}
         \hfill
         \begin{subfigure}[b]{0.48\textwidth}
             \centering
             \includegraphics[width=\textwidth]{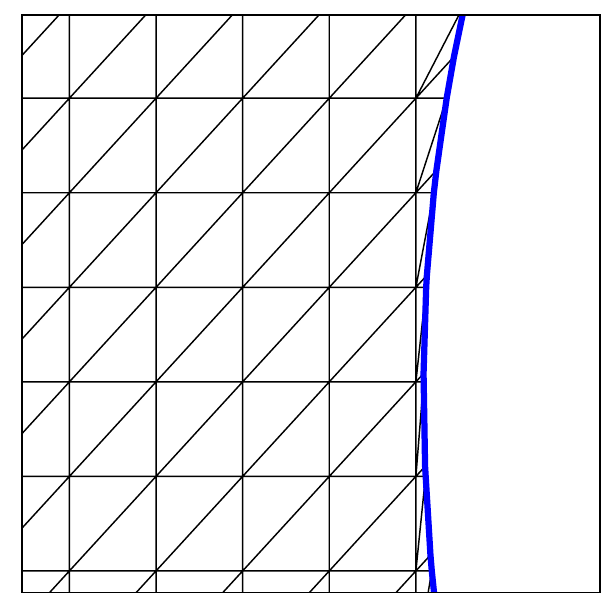}
         \end{subfigure}
         
         \vspace{10pt} % Space between the two rows
         
         % Bottom Row of Right Column
         \begin{subfigure}[b]{0.48\textwidth}
             \centering
             \includegraphics[width=\textwidth]{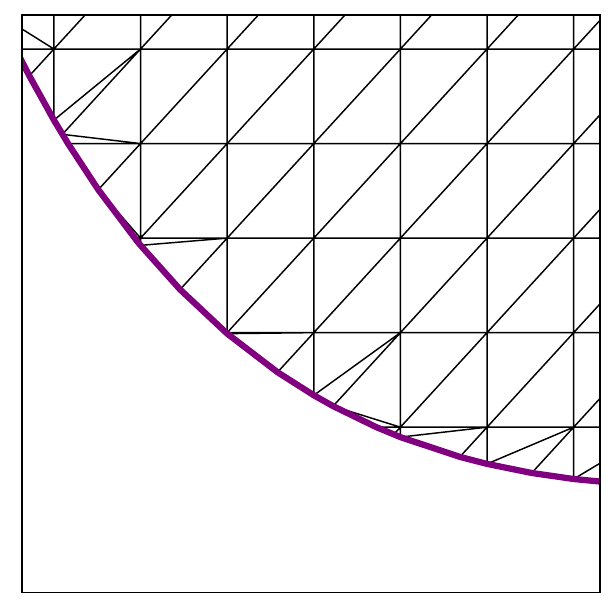}
         \end{subfigure}
         \vspace{10pt}
         %\hfill
         \begin{subfigure}[b]{0.48\textwidth}
             \centering
             \includegraphics[width=\textwidth]{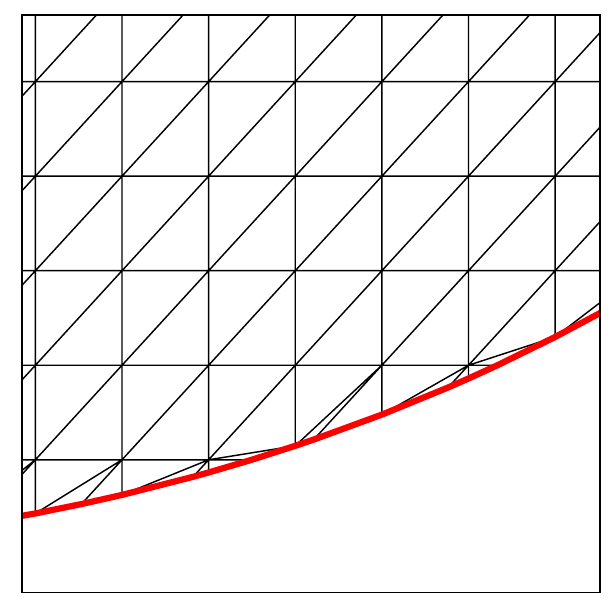}
         \end{subfigure}
         \vspace{10pt}
         \label{fig:small_grid}
     \end{subfigure}
     
     \caption{The domain $\Omega$ and selected regions of the triangular mesh near the boundary $\partial\Omega$. Since the boundary is allowed to intersect the background mesh arbitrarily, small and sliver elements may appear along the boundary.}
     \label{fig:total_layout}
\end{figure}

We consider the source function
\begin{align}
    f(\bfx) = -\sin(2(x_1+x_2)) + \frac{2}{9} + (10x_1^2 - 1)e^{-x_1^2},
\end{align}
and impose homogeneous Dirichlet boundary conditions $g(\bfx)=0$.

The problem is solved on a sequence of seven meshes with $\Delta x \in [1.6\times 10^{-3}, \, 8\times 10^{-3}]$ for 50 different values of $\phi$ and $\omega$. A reference solution is computed using the method of \cite{fryklund2022adaptive}. Figure \ref{fig:rndcurves} shows the relative $\ell_2$ and $\ell_\infty$ errors as functions of the mesh size for $\alpha = 10$. We observe stable convergence under domain rotation and translation, indicating that the method is largely insensitive to how the mesh intersects the underlying grid.

\begin{figure}[h!]
\centering
    \begin{subfigure}[b]{0.45\textwidth}
    \centering
    \includegraphics[width=7.2cm]{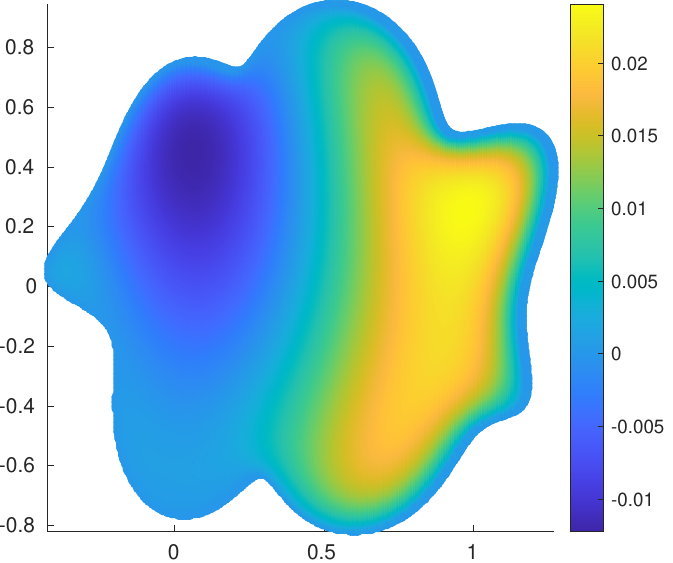}
    \caption{\label{fig:usol_rndcurves}}
    \end{subfigure}
\quad
    \begin{subfigure}[b]{0.45\textwidth}
    \centering
    \vspace{1.5em}
    \includegraphics[width=7.2cm]{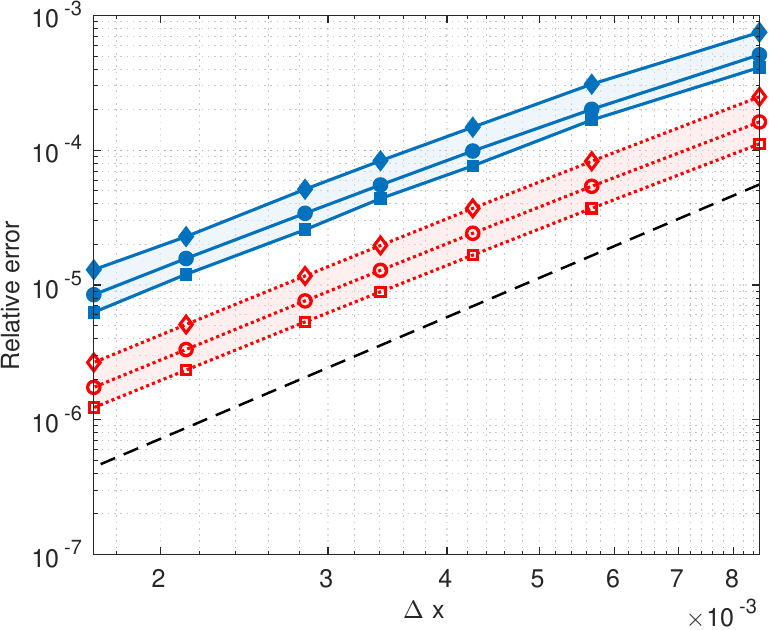}
    \caption{\label{fig:rndcurves}}
    \end{subfigure}
\caption{\label{rnd_mov} (a) Computed solution on one translated and rotated domain of the form \eqref{gamma_tilde}, with rotation angle $\phi = 0.04$ and translation $\omega \approx 0.45+0.038i$. (b) Relative $\ell_2$ (dotted red) and $\ell_\infty$ (solid blue) errors as functions of $\Delta x$, computed over 50 translated and rotated domains. For each value of $\Delta x$, square markers correspond to the smallest error over the 50 domains, diamond markers to the largest error and circular markers to the mean error. }
\end{figure}

To further assess robustness, we examine the quality of the cut–cell mesh by computing the aspect ratio
\begin{align}
    \mathrm{AR} = \frac{R}{2r},
\end{align}
where $R$ and $r$ denote the circumradius and inradius of each triangle, respectively. For an equilateral triangle, $\mathrm{AR}=1$, while large values indicate highly elongated elements.

For the rotated and shifted domain that yields the largest error, we compute the distribution of aspect ratios. On the coarsest mesh, the worst elements exhibit aspect ratios in the range $10^3-10^4$, corresponding to extremely degenerate sliver triangles. Despite the presence of these elements, the overall error remains stable. This confirms that the accuracy of the present method is largely insensitive to mesh quality.

\subsection{Dirichlet problem on Hilbert tube}\label{sec:hilbert}
Lastly, we consider a Hilbert tube domain. The construction is based on the finite approximating polygons of Hilbert's space-filling curve, as described in \cite{sagan1994space}. Rather than using the limiting space-filling curve, we use a finitely resolved Hilbert-type polygonal path. The recursive construction is stopped when the smallest feature width is 0.5. Following the construction used for Hilbert type geometries, the path is then mirrored to form a closed centerline. The resulting curve is translated so that its centroid is at the origin and scaled by a factor of 3.

To obtain a smooth periodic representation, we sample the closed centerline at 32768 points and retain 1024 Fourier modes. This gives a smooth periodic curve $\gamma_c(t)$, $t\in[0,2\pi]$. We define the corresponding unit normal by $ \boldsymbol{\nu}(t) = -i\frac{\gamma_c'(t)}{|\gamma_c'(t)|}$. The computational domain $\Omega$ is defined as a tubular neighborhood of $\gamma_c$ with total width 0.2. Equivalently, with half-width $H=0.1$, the inner and outer walls are 
\begin{align}\label{hilbert_dom}
    \partial\Omega_{\mathrm{out}} = \{\gamma_c(t)-H\boldsymbol{\nu}(t):t\in[0,2\pi]\}, \qquad \partial\Omega_{\mathrm{in}} = \{\gamma_c(t)+H\boldsymbol{\nu}(t):t\in[0,2\pi]\}. 
\end{align}
We consider the exact solution
\begin{align}\label{usol_hilbert}
    u(\bfx) = 1+0.2x-0.15y+\sin(2x+3y)+0.3\cos(5x-2y)+e^{-1.5(x^2+y^2)},
\end{align}
and set the source function $f(\bfx)$ and the Dirichlet boundary data $g(\bfx)$ so that \eqref{mod_helm} and \eqref{Dirichlet} are satisfied. The solution on the domain $\Omega$ is shown in Figure \ref{sol_hilbert}.

For our numerical experiment, we let $\alpha = 10$, $\Delta x \approx 2.5\times 10^{-3}$ and $\varepsilon = 10^{-6}$. From the Nyquist criterion \eqref{k_nyq}, we can determine the largest resolvable wavenumber $k_{\text{Nyq}}$. We choose $k_{\max} = 0.9\times k_{\text{Nyq}} \approx 1141$. From \eqref{kmax}, we can then determine the smallest history cutoff $\delta \approx 1.06\times 10^{-5}$ that can be used. With $J=3$ levels of dyadic refinement, we get $\delta_* \approx 1.66\times 10^{-7}$. Figure \ref{errplot_hilbert} shows the pointwise error distribution on the mesh. The $\ell_2$-error is approximately $1.34\times 10^{-5}$ and the $\ell_\infty$-error is approximately $6.61\times 10^{-4}$. We run the same test for $\alpha =  100$, with the same $\Delta x$. This gives us an $\ell_2$-error of approximately $3.19\times 10^{-5}$ and an $\ell_\infty$-error of approximately $6.92\times10^{-4}$.

\begin{figure}[h!]
    \begin{subfigure}{0.45\textwidth}
            \centering
    \includegraphics[width=6.0cm]{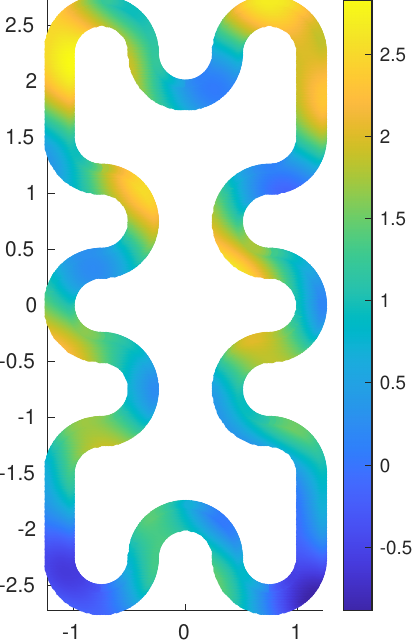}
    \vspace{1mm}
    \caption{\label{sol_hilbert} }
    \end{subfigure}
    \quad
    \begin{subfigure}{0.45\textwidth}
        \centering
        \includegraphics[width=5.8cm]{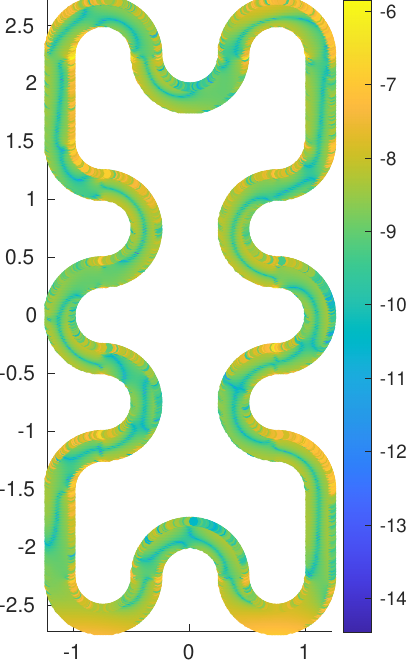}
        \caption{\label{errplot_hilbert}}
    \end{subfigure}
\caption{(a) Manufactured solution \eqref{usol_hilbert} evaluated on the Hilbert tube domain. The domain is bounded by the inner and outer walls defined in \eqref{hilbert_dom}. (b) Pointwise error distribution, shown as $\log_{10}$ of the magnitutde of the relative error, on the triangular cut-cell mesh. The error remains small throughout the narrow tube, also near the highly curved parts of the boundary.}
\end{figure}

%%%%%%%%%%%%%%%%%%%%%%%%%%%%%%
%%%%%%%%%%%%%%%%%%%%%%%%%%%%%%
%%%%%%%%%%%%%%%%%%%%%%%%%%%%%%

\section{Conclusions}

We have shown that layer and volume potentials for the modified Helmholtz equation can be treated within a single kernel-splitting framework that is both geometrically flexible and largely discretization agnostic. By using the same evaluation strategy across all target regimes and requiring only quadrature rules for integration of smooth functions over the boundary and the domain, the framework remains applicable even in complex or moving geometries.

The method extends the Poisson framework of \cite{fryklund2026lightweight} to the modified Helmholtz setting. It uses an integral representation of the free-space Green’s function, which splits the kernel into a short-range local part and a smooth long-range history part. The history contribution is evaluated in Fourier space using the NUFFT, while the local contribution is treated by asymptotic expansions. For layer potentials, the local part is further decomposed into dyadic refinement levels, leading to smooth and rapidly decaying difference kernels that can be resolved without specialized quadrature rules.

A key advantage of the framework is its compatibility with cut-cell meshes. Since the volumetric mesh is used only to provide quadrature nodes and weights for the source term, small or distorted cut cells do not introduce stiffness or conditioning issues. This makes the approach particularly attractive for elliptic marching methods for time-dependent problems, where implicit time discretizations, such as for the heat equation, lead to modified Helmholtz problems at each time step. In moving-domain settings, cells near the boundary can be updated on a fixed background grid, avoiding remeshing as the geometry evolves.

The numerical experiments demonstrate the expected convergence rates for both the local and history contributions, as well as efficient evaluation of layer and volume potentials. They also show robustness with respect to mesh geometry: translating or rotating a fixed domain relative to the background grid changes the shape and size of boundary triangles, yet has little effect on the observed errors. Even elements with extreme aspect ratios of $10^3-10^4$ only marginally affected the accuracy. The Hilbert tube example further demonstrates that the method remains accurate even for narrow, high-curvature domains with complicated boundaries.

The framework extends naturally to three dimensions, since the overall algorithmic structure remains unchanged. The main additional work lies in deriving and implementing the corresponding three-dimensional local expansions and dyadic refinement strategy. A promising direction for future work is to incorporate the present framework into a complete elliptic-marching solver for time-dependent problems, particularly in moving or geometrically complex domains.

\section*{Acknowledgement}
Edith Frisk Gärtner and Anna-Karin Tornberg gratefully acknowledge support from the Swedish Research Council under grant no 2023-04269. Fredrik Fryklund gratefully acknowledges support from the Knut and Alice Wallenberg Foundation under grant 2020.0258.

\newpage

\bibliographystyle{abbrv}
\bibliography{refs}

\end{document}